\newcommand{\Integer}{\mathbb{Z}}
\newcommand{\Natural}{\mathbb{N}}
\newcommand{\Naturalstar}{\mathbb{N}_*}
\newcommand{\Real}{\mathbb{R}}
\newcommand{\hilbert}{\mathcal{H}}
\newcommand{\innprod}[2]{\left\langle{#1},{#2}\right\rangle}
\newcommand{\norm}[1]{\left\|{#1}\right\|}
\DeclareMathOperator{\argmin}{arg\,min}
\DeclareMathOperator{\relinterior}{ri}
\DeclareMathOperator{\interior}{int}
\DeclareMathOperator{\conv}{conv}
\DeclareMathOperator{\sign}{sgn}
\DeclareMathOperator{\lev}{lev_{\leq 0}}
\DeclareMathOperator{\Fix}{Fix}
\DeclareMathOperator{\supp}{supp}
\DeclareMathOperator{\graph}{gph}
\theoremstyle{definition}
\newtheorem{theorem}{Theorem}
\newtheorem{algo}[theorem]{Algorithm}
\newtheorem{fact}[theorem]{Fact}
\newtheorem{lemma}[theorem]{Lemma}
\newtheorem{definition}[theorem]{Definition}
\newtheorem{example}[theorem]{Example}
\newtheorem{assumption}[theorem]{Assumption}
\newtheorem{problem}{Problem}
\begin{document}

\title{The Adaptive Projected Subgradient Method\\ constrained by
  families of quasi-nonexpansive mappings\\ and its application to
  online learning}

\author{Konstantinos Slavakis$^1$}
\address{$^1$University of Peloponnese, Department of
  Telecommunications Science and Technology, Karaiskaki St., Tripolis
  22100, Greece. Tel: +30.2710.37.2204, Fax: +30.2710.37.2160.}
\email{slavakis@uop.gr}

\author{Isao Yamada$^2$}
\address{$^2$Tokyo Institute of Technology, Department of
  Communications and Integrated Systems, S3-60, Tokyo 152-8550,
  Japan. Tel: +81.3.5734.2503, Fax: +81.3.5734.2905.}
\email{isao@sp.ss.titech.ac.jp}

\begin{abstract}
Many online, i.e., time-adaptive, inverse problems in signal
processing and machine learning fall under the wide umbrella of the
asymptotic minimization of a sequence of non-negative, convex, and
continuous functions. To incorporate a-priori knowledge into the
design, the asymptotic minimization task is usually constrained on a
fixed closed convex set, which is dictated by the available a-priori
information. To increase versatility towards the usage of the
available information, the present manuscript extends the Adaptive
Projected Subgradient Method (APSM) by introducing an algorithmic
scheme which incorporates a-priori knowledge in the design via a
sequence of strongly attracting quasi-nonexpansive mappings in a real
Hilbert space. In such a way, the benefits offered to online learning
tasks by the proposed method unfold in two ways: 1) the rich class of
quasi-nonexpansive mappings provides a plethora of ways to cast
a-priori knowledge, and 2) by introducing a sequence of such mappings,
the proposed scheme is able to capture the time-varying nature of
a-priori information. The convergence properties of the algorithm are
studied, several special cases of the method with wide applicability
are shown, and the potential of the proposed scheme is demonstrated by
considering an increasingly important, nowadays, online sparse
system/signal recovery task.
\end{abstract}

\keywords{Projection, quasi-nonexpansive mapping, fixed point,
  subgradient, asymptotic minimization, online learning, system
  identification, sparsity}

\subjclass[2010]{47N10, 47H09, 94A12, 65J22, 65K10, 90C25}
\date{\today}

\maketitle

\section{Introduction}\label{sec:intro}

Many online, i.e., time-adaptive, inverse problems in signal
processing and machine learning can be recast as follows \cite{Haykin,
  sayed.book, liu.principe.haykin, KivinenEtal, EngelEtalKRLS,
  ChenHero09, angelosante.occd, myyy.soft.thres.icassp10,
  Slav.Kops.Theo.ICASSP10, kst@tsp11, YKostasY, KostasClassifyTSP,
  ksi.Beam.TSP, tsy.sp.magazine, sbt.mimo.tnn}; if the non-negative
integer $n\in\Natural$ denotes discrete time, having at our disposal a
sequence of multidimensional data $(\bm{a}_n,d_n)_{n\in\Natural}
\subset\Real^L \times\Real$, the objective of an online learning
method is to infer a possibly time-varying unknown mapping $x_*:
\Real^L \rightarrow\Real$, which relates the previous data under the
following model:
\begin{equation}
d_n = x_*(\bm{a}_n) + \zeta_n,\quad \forall n\in\Natural. \label{model}
\end{equation}
In other words, at the $n$-th time instant, the $L$-dimensional input
signal $\bm{a}_n$ interacts with the signal/system which underlies
$x_*$, and our observation is the real valued $d_n$ which is
contaminated by the additive noise $\zeta_n$.

Online learning methods show distinct differences from their
\textit{batch} counterparts due to the following fundamental reason:
batch optimization methods are mobilized \textit{after} all the
necessary data are available to the designer, whereas, in the online
scenario, the sequential nature of the data
$(\bm{a}_n,d_n)_{n\in\Natural}$ dictates that at each time instant
$n$, the newly arriving $(\bm{a}_n,d_n)$ should be efficiently
incorporated into the learning process, without the need of solving
the optimization task from scratch. Such a sequential mode is not
prescribed only by the need for computational efficiency and
savings. The online processing of data becomes an efficient tool also
in cases of dynamic scenarios, where not only the probability density
function of the input data $(\bm{a}_n)_{n\in\Natural}$ changes with
time, but also where the unknown mapping $x_*$ shows a time-varying
nature. In such time dependent environments, and in order to monitor
the time variations of the underlying signals and systems, the
designer is compelled to gradually disregard data which are associated
to the remote past, and to put emphasis on recently received
$(\bm{a}_n,d_n)$. It becomes clear that flexible and multifaceted
online learning tools are needed in order to deal with fast emerging
signal processing and machine learning applications, like
sparsity-aware learning \cite{ChenHero09, angelosante.occd,
  myyy.soft.thres.icassp10, kst@tsp11}, time-adaptive sensor networks
\cite{simos.ieee.tsp, cattivelli.birds}, etc.

The unknown mapping $x_*$ of \eqref{model} could be either linear or
non-linear. Our assumption on the linearity or not of $x_*$ dictates the
choice of possible spaces into which we perform our search for $x_*$. If
$x_*$ is assumed linear, then our working space becomes the classical
Euclidean $\Real^L$ \cite{Haykin, sayed.book}. On the other hand, if
$x_*$ is assumed non-linear, a mathematical sound way to model a fairly
large amount of non-linear systems is to work in a possibly infinite
dimensional \textit{Reproducing Kernel Hilbert Space (RKHS)}
\cite{Aronszajn}; a strategy which has been particularly successful in
machine learning and pattern recognition tasks \cite{ScholkopfSmola,
theo.koutrou, liu.principe.haykin, EngelEtalKRLS,
KostasClassifyTSP, ksi.Beam.TSP, tsy.sp.magazine,
Bouboulis.kernel.image}. Since the Euclidean $\Real^L$ is a renowned
Hilbert space, and in order to offer a unifying framework for linear and
non-linear systems, the stage of the following discussion will be based
on a real Hilbert space $\hilbert$.

Given an estimate $x\in\hilbert$ of the unknown $x_*$, the most common
way to validate $x$, with respect to the model \eqref{model}, is to
penalize the disagreement of the observed output $d_n$ with
$x(\bm{a}_n)$, i.e., the real-valued difference $x(\bm{a}_n) -d_n$. A
classical way to quantify such a perception of loss is to use the
quadratic function in order to form the penalty $\left(x(\bm{a}_n)
-d_n \right)^2$. The popularity of the quadratic loss function is
based on its optimality in estimation tasks where the contaminating
noise process $(\zeta_n)_{n\in\Natural}$ is Gaussian
\cite{stanford.stats.book}. However, in order to establish a general
framework for estimation problems, where the noise process is not
constrained to be Gaussian, and in order to build estimators which
show robustness to a wide variety of outliers, we give ourselves the
freedom to employ any convex function $\mathcal{L}: \Real\rightarrow
[0,\infty)$, and not just the quadratic one, in order to quantify our
  perception of loss (see for example \cite{sbt.mimo.tnn}). Having the
  data $(\bm{a}_n, d_n)$ as parameters in the design, the following
  function is naturally defined on the space $\hilbert$ of our
  estimates: $\Theta_n: \hilbert \rightarrow [0,\infty): x\mapsto
    \mathcal{L}(x(\bm{a}_n) -d_n)$. Due to the online nature of the
    problem, i.e., the sequential data $(\bm{a}_n,
    d_n)_{n\in\Natural}$, we end up in a sequence of loss functions
    $(\Theta_n)_{n\in\Natural}$. We stress here that since
    $\mathcal{L}$ can be any convex function, $\Theta_n$ is not bound
    to be differentiable.

Theory, e.g., Bayesian inference \cite{stanford.stats.book}, as well
as everyday practice suggest that apart from the information included
in the training sequence $(\bm{a}_n, d_n)_{n\in\Natural}$, estimation
is enhanced if one employs also the \textit{a-priori knowledge} about
the unknown system $x_*$. We will abide here by the set theoretic
estimation approach \cite{Combettes.foundations} and quantify the
a-priori knowledge as a closed convex set $C$ in $\hilbert$. The first
attempt to attack the task of online learning as the asymptotic
minimization of a sequence $(\Theta_n)_{n\in\Natural}$, over a
nonempty closed convex set $C$, was given in \cite{YamadaAPSM,
  YamadaOguraAPSMNFAO}, by means of the following simple iteration,
called the \textit{Adaptive Projected Subgradient Method (APSM);} for
an arbitrary initial point $u_0\in\hilbert$, let
\begin{equation*}
\forall n\in\Natural, \quad u_{n+1} := \begin{cases}
P_C\left(u_n - \lambda_n
\frac{\Theta_n(u_n)}{\norm{\Theta_n'(u_n)}^2}
\Theta_n'(u_n) \right), & \text{if}\ \Theta_n'(u_n) \neq
0,\\
P_C(u_n), & \text{if}\ \Theta_n'(u_n) = 0,
\end{cases}
\end{equation*}
where $\lambda_n\in(0,2)$, $P_C$ stands for the metric projection
mapping onto $C$, and $\Theta'_n(u_n)$ denotes any subgradient of
$\Theta_n$ at $u_n$, $\forall n\in\Natural$. The previous recursion is
a time-adaptive generalization of the classical algorithm of Polyak
\cite{PolyakUnsmooth}, which deals with the minimization problem of a
\textit{fixed}, non-smooth, convex and continuous function $\Theta$
over $C$. Besides the new directions for online learning
\cite{tsy.sp.magazine}, the previous recursion has offered
also a unification of several standard algorithms in classical
adaptive filtering \cite{Haykin, sayed.book}. Indeed, by letting
$\hilbert:= \Real^L$, for an appropriately chosen sequence
$(\Theta_n)_{n\in\Natural}$, and by substituting $P_C$ with the
identity mapping, the previous recursion \cite{YamadaAPSM,
  YamadaOguraAPSMNFAO, YKostasY} results in the classical Normalized
Least Mean Squares (NLMS) \cite{NagumoNoda, AlbertGardner} and the,
vastly used nowadays, Affine Projection Algorithm (APA)
\cite{HinamotoMaekawa, OzekiUmeda}.

It is often the case that a single closed convex set $C$, or even
better, a single metric projection mapping $P_C$, cannot capture the
diversity of the a-priori knowledge in signal processing
applications. For example, in a robust beamforming problem
\cite{sy.wideband}, the a-priori knowledge is usually expressed as $C=
\bigcap_{m=1}^M C_m$, where $\{C_m\}_{m=1}^M$ is a number of closed
convex sets, with associated projection mappings $\{P_{C_m}\}_{m=1}^M$
that are usually easy to compute. However, an analytic expression for
$P_C$ might not be available \cite{sy.wideband}. Secondly, erroneous
a-priori information may result into an empty $C= \bigcap_{m=1}^M C_m=
\emptyset$ \cite{yukawa.splitting, sy.wideband}. How is it possible to
deal with multiple closed convex sets $\{C_m\}_{m=1}^M$ where an
analytical expression of $P_C$ is not available, or the
$\{C_m\}_{m=1}^M$ share an empty intersection? Avoiding the
straightforward and recently popular solution of relaxing the original
constraints, the study in \cite{Kostas.APSM.NFAO} provides with a
solution to the previous problem and extends \cite{YamadaAPSM,
  YamadaOguraAPSMNFAO} by using a mapping $T$, in the place of $P_C$,
which belongs to the general class of strongly attracting nonexpansive
mappings. Indeed, the method \cite{Kostas.APSM.NFAO} demonstrated its
potential in a wide variety of online learning tasks, which span from
classical linear adaptive filtering \cite{yukawa.splitting} to
non-linear classification and regression tasks \cite{tsy.sp.magazine}.

It is natural to ask now whether we can add more freedom to the usage
of the a-priori knowledge. Our motivation is based on a couple of
elementary observations. First, given the well-known fact that a
nonempty closed convex set $C$ is the set of all minimizers of the
distance function $d(\cdot,C)$ to $C$, one of the ways to visualize
a-priori knowledge could be the set of all minimizers of a generally
non-smooth convex function defined on an appropriate Hilbert space
$\hilbert$. Secondly, it is often the case in practice where a
minimizer of a convex function cannot be reached either by an
analytical formula or a computationally cheap process. A powerful
mapping, whose recursive application is known to minimize a generally
non-differentiable convex function, is the subgradient projection
mapping \cite{BauschkeCombettesWeak2Strong, yo.hybrid.quasi.ne,
  bauschke.combettes.book}. It is also known that this operator
belongs to the class of quasi-nonexpansive mappings
\cite{BauschkeCombettesWeak2Strong, yo.hybrid.quasi.ne,
  bauschke.combettes.book}, which strictly contains all the strongly
attracting nonexpansive mappings, utilized in
\cite{Kostas.APSM.NFAO}. Now, the question arises naturally: does the
APSM still operate when constrained by the general class of
quasi-nonexpansive mappings, and can we, thus, devise a method with
more freedom in incorporating a-priori information, than in the
studies of \cite{YamadaAPSM, YamadaOguraAPSMNFAO, Kostas.APSM.NFAO}?
Given the wide applicability of the APSM in online learning tasks
\cite{tsy.sp.magazine}, it is anticipated that such a generalization
will add further flexibility to the APSM in order to tackle more
challenging online learning tasks, which have been recently emerging
both in signal processing and machine learning \cite{ChenHero09,
  angelosante.occd, myyy.soft.thres.icassp10, kst@tsp11,
  simos.ieee.tsp, cattivelli.birds}.

The present manuscript introduces an extension of the APSM
\cite{Kostas.APSM.NFAO, YamadaAPSM, YamadaOguraAPSMNFAO}, towards a
more flexible usage of the a-priori information, in two ways: 1) by
considering a strictly larger class of mappings than in
\cite{Kostas.APSM.NFAO, YamadaAPSM, YamadaOguraAPSMNFAO}, and in
particular, operators taken from the rich family of
\textit{quasi-nonexpansive mappings}, and 2) by letting these mapping
to be \textit{time-varying} in order to capture the, quite often in
signal processing and machine learning applications, dynamic nature of
the a-priori information. Put in mathematical terms, the problem to be
studied is the following.

\begin{problem}[Constrained asymptotic minimization task]\label{the.problem}
Given a sequence of convex, continuous, and not necessarily
differentiable functions $(\Theta_n: \hilbert\rightarrow
[0,\infty))_{n\in\Natural}$, and a sequence of strongly attracting
  quasi-nonexpansive mappings $(T_n: \hilbert \rightarrow
  \hilbert)_{n\in\Natural}$, with nonempty fixed point sets
  $(\Fix(T_n))_{n\in\Natural}$, we are looking for a sequence
  $(u_n)_{n\in\Natural}$ that asymptotically minimizes
  $(\Theta_n)_{n\in\Natural}$ over
  $(\Fix(T_n))_{n\in\Natural}$. Strictly speaking, our objective is to
  generate a $(u_n)_{n\in\Natural}$ such that
  $\lim_{n\rightarrow\infty}\Theta_n(u_n)=0$, and the set of its
  strong cluster points $\mathfrak{S}((u_n)_{n\in\Natural})$ lies in
  $\limsup_{n\rightarrow\infty} \Fix(T_n)$, i.e.,
  $\mathfrak{S}((u_n)_{n\in\Natural})\subset
  \limsup_{n\rightarrow\infty} \Fix(T_n)$.
\end{problem}

Our algorithmic tool to tackle the previous optimization task is the
following.

\begin{algo}\label{the.algo}
Given an arbitrary initial point $u_0\in\hilbert$, generate the
following sequence:
\begin{equation}
\forall n\in\Natural, \quad u_{n+1} := \begin{cases}
T_n\left(u_n - \lambda_n
\frac{\Theta_n(u_n)}{\norm{\Theta_n'(u_n)}^2}
\Theta_n'(u_n) \right), & \text{if}\ \Theta_n'(u_n) \neq
0,\\
T_n(u_n), & \text{if}\ \Theta_n'(u_n) = 0,
\end{cases}\label{basic.recursion}
\end{equation}
where $\lambda_n\in(0,2)$ and
$\Theta_n'(u_n)$ stands for any subgradient of $\Theta_n$ at $u_n$,
$\forall n\in\Natural$.
\end{algo}

The manuscript is organized as follows. A series of necessary
definitions and facts are included in Section~\ref{sec:junbi}. The
algorithm and its convergence analysis follow in
Section~\ref{sec:algo}. Special cases of the algorithm, with a wide
application range in online learning, can be found in
Section~\ref{sec:examples}. The potential of the method is shown in
Section~\ref{sec:application} by introducing a low-complexity
time-adaptive learning technique for the increasingly important,
nowadays, sparse system/signal recovery task.

\section{Preliminaries}\label{sec:junbi}

We start with several notations which will be frequently used in the
sequel.

The set of all non-negative integers, positive integers, and real
numbers will be denoted by $\Natural$, $\Naturalstar$, and $\Real$,
respectively. The set of all subsequences of $\Natural$ will be
denoted by $\Natural_{\infty}^{\#}$, i.e., $\Natural_{\infty}^{\#}:=
\{N\subset \Natural: N\ \text{is infinite}\}$
\cite{Rockafellar.Wets}. Any $N\in \Natural_{\infty}^{\#}$ can be also
denoted by the standard way of $N=(n_k)_{k\in \Natural}$. Define,
also, $\Natural_{\infty}:= \{N\subset \Natural: \Natural\setminus
N\ \text{is finite}\}$ \cite{Rockafellar.Wets}. In other words,
$\Natural_{\infty}$ contains all the ``neighborhoods of $\infty$'',
with respect to $\Natural$, while $\Natural_{\infty}^{\#}$ is its
associated ``grill'' \cite{Rockafellar.Wets}.

Henceforth, the symbol $\hilbert$ will stand for a real Hilbert space,
equipped with an inner product $\innprod{\cdot}{\cdot}$, and a norm
$\norm{\cdot}:=\sqrt{\innprod{\cdot}{\cdot}}$. In the case where
$\hilbert$ becomes the Euclidean $\Real^L$, $L\in \Naturalstar$, any
element of $\Real^L$ will be denoted by boldfaced symbols. The inner
product of $\Real^L$ will be the classical vector dot product, i.e.,
$\innprod{\bm{v}_1}{\bm{v}_2}:= \bm{v}_1^t \bm{v}_2$, $\forall
\bm{v}_1, \bm{v}_2\in \Real^L$, where the superscript $t$ stands for
vector/matrix transposition.

Given an $x\in\hilbert$ and a $\rho>0$, an open ball is defined as the
set $B(x,\rho):= \{v\in\hilbert:\ \norm{x-v}<\rho\}$, while a closed
ball $B[x,\rho]:= \{v\in\hilbert:\ \norm{x-v}\leq \rho\}$. Given $S,
\Upsilon\subset \hilbert$, the \textit{relative interior of $S$ with
  respect to $\Upsilon$} is defined as $\relinterior_{\Upsilon} S:=
\{\mathring{v}\in S:\ \exists \rho>0, \emptyset\neq
(B(\mathring{v},\rho) \cap \Upsilon) \subset S\}$. The
\textit{interior} of $S$ is defined as $\interior S:=
\relinterior_{\hilbert} S$.

Given $S\subset\hilbert$, define the \textit{distance function to $S$}
as follows: $d(\cdot,S): \hilbert\rightarrow [0,\infty): x \mapsto
  d(x,S):= \inf\{\norm{x-v}: v\in S\}$. Given any nonempty
  \textit{closed convex} set $C\subset\hilbert$, the \textit{(metric)
    projection onto $C$} is defined as the mapping $P_C: \hilbert
  \rightarrow C$ which maps to an $x\in\hilbert$ the (unique)
  $P_C(x)\in C$ such that $\norm{x-P_C(x)} = d(x,C)$.

\begin{definition}[Subdifferential and subgradient]\label{def:subgradient}
Given a convex function $\Theta: \hilbert \rightarrow\Real$, the
subdifferential of $\Theta$ is defined as the set-valued mapping:
\begin{equation*}
\partial \Theta : \hilbert \rightarrow 2^{\hilbert} : x\mapsto
\partial\Theta(x):= \{v\in\hilbert:\ \forall y\in\hilbert,
\innprod{v}{y-x} + \Theta(x)\leq \Theta(y)\}.
\end{equation*}
In the case where $\Theta$ is continuous at $x$, then
$\partial\Theta(x)\neq \emptyset$ \cite{EkelandTemam}. Any element in
$\partial\Theta(x)$ will be called a \textit{subgradient of $\Theta$
  at $x$,} and will be denoted by $\Theta'(x)$. If $\Theta$ is
G\^{a}teaux differentiable at $x$, then $\partial\Theta(x)$ becomes a
singleton, and the unique element of $\partial\Theta(x)$ is nothing
but the classical G\^{a}teaux differential of $\Theta$ at $x$. Notice,
also, the well-known fact: $0\in \partial\Theta(x) \Leftrightarrow
x\in\argmin_{v\in\hilbert} \Theta(v)$.
\end{definition}

\begin{example}\label{ex:partial.distance}
The subdifferential of the metric distance function to a closed convex
set $C\subset\hilbert$ is given as follows:
\begin{equation*}
\partial d(x,C) = 
\begin{cases}
 N_C(x) \cap B[0,1], & \text{if}\ x\in C,\\ 
\frac{x-P_C(x)}{d(x,C)}, & \text{if}\ x\in \hilbert\setminus C,
\end{cases}
\end{equation*}
where $N_C(x):=\{v\in\hilbert:\ \forall y\in C, \innprod{v}{y-x}\leq
 0\}$. Notice that $\forall x\in\hilbert$, $\forall d'(x,C) \in \partial
 d(x,C)$, $\norm{d'(x,C)}\leq 1$.
\end{example}

\begin{definition}[\cite{BauschkeBorwein,
      BauschkeCombettesWeak2Strong, yo.hybrid.quasi.ne,
      bauschke.combettes.book}]
 \label{def:quasi-nonexpansive} Given a mapping
$T:\hilbert\rightarrow\hilbert$, the set of all fixed points of $T$,
 i.e., $\Fix(T) := \{v\in\hilbert:\ T(v) = v\}$, is called the
 \textit{fixed point set} of $T$. Assume a
 $T:\hilbert\rightarrow\hilbert$ such that $\Fix(T)\neq
 \emptyset$. The mapping $T$ will be called
 \textit{quasi-nonexpansive} if $\forall x\in\hilbert$, $\forall
 v\in\Fix(T)$, $\norm{T(x)- v} \leq \norm{x-v}$. It can be verified
 that the fixed point set of a quasi-nonexpansive mapping is closed
 and convex, e.g., \cite[Prop.\ 2.3 and
   2.6]{BauschkeCombettesWeak2Strong}. If
\begin{equation*}
\exists\eta>0:\ \forall x\in\hilbert, \forall
v\in\Fix(T), \quad \eta\norm{x-T(x)}^2 \leq \norm{x-v}^2 -
\norm{T(x)-v}^2,
\end{equation*}
then $T$ will be called \textit{$\eta$-attracting} or \textit{strongly
 attracting quasi-nonexpansive}.

Now, if $\forall x$, $y\in\hilbert$, $\norm{T(x)-T(y)} \leq \norm{x-y}$,
then $T$ will be called \textit{nonexpansive}. In the case where $T$ is
both nonexpansive and strongly attracting quasi-nonexpansive, then it
will be called \textit{strongly attracting nonexpansive}.

In particular, an $1$-attracting (quasi)-nonexpansive mapping will be
called \textit{firmly (quasi)-nonexpansive}.
\end{definition}

\begin{fact}[Equivalent description of strongly attracting
    quasi-nonexpansive mappings \cite{yo.hybrid.quasi.ne,
      vasin.ageev}]\label{fact:averaged.maps} The following statements
  are equivalent for a mapping $T:\hilbert\rightarrow \hilbert$.
\begin{enumerate}[1.]
\item $T$ is $\eta$-attracting quasi-nonexpansive.
\item $T$ is $\frac{1}{1+\eta}$-averaged quasi-nonexpansive. A mapping
  $T$ is called \textit{$\alpha$-averaged quasi-nonexpansive,} with
  $\alpha\in(0,1)$, if there exists a quasi-nonexpansive mapping
  $R:\hilbert \rightarrow\hilbert$ such that $T= (1-\alpha)I + \alpha R$.
\end{enumerate}
In particular, $T$ is firmly quasi-nonexpansive iff $T$ is
$\frac{1}{2}$-averaged quasi-nonexpansive. Notice that $\forall
\alpha\in (0,1)$, $\Fix(T)=\Fix(R)$, which suggests that given a
quasi-nonexpansive mapping $R$, we can always construct a strongly
attracting quasi-nonexpansive $T$ that shares the same fixed point set
with $R$.
\end{fact}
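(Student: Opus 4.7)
The plan is to prove both directions of the equivalence by exploiting a single algebraic identity. Whenever $T$ is written as $T = (1-\alpha)I + \alpha R$ with $\alpha \in (0,1)$, we can apply the elementary identity
\begin{equation*}
\norm{(1-\alpha)a + \alpha b}^2 = (1-\alpha)\norm{a}^2 + \alpha\norm{b}^2 - \alpha(1-\alpha)\norm{a-b}^2
\end{equation*}
with $a := x-v$ and $b := R(x)-v$, so its left-hand side reads $\norm{T(x)-v}^2$. Combined with the relation $x - R(x) = \frac{1}{\alpha}(x - T(x))$, which is immediate from the definition of $R$, this produces the key equality
\begin{equation*}
\norm{T(x) - v}^2 = (1-\alpha)\norm{x-v}^2 + \alpha\norm{R(x) - v}^2 - \tfrac{1-\alpha}{\alpha}\norm{x - T(x)}^2,
\end{equation*}
valid for every $x,v \in \hilbert$. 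The identification $\alpha = \frac{1}{1+\eta}$, equivalently $\eta = \frac{1-\alpha}{\alpha}$, will then match the two coefficient structures appearing in Definition~\ref{def:quasi-nonexpansive} and in the statement.

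For the implication $1 \Rightarrow 2$, I would set $\alpha := \frac{1}{1+\eta}$ and define $R := \frac{1}{\alpha}\bigl(T - (1-\alpha)I\bigr)$, so that $T = (1-\alpha)I + \alpha R$ by construction. The equality $\Fix(R) = \Fix(T)$ follows at once from $R(v) = v \Leftrightarrow T(v) = v$ (since $\alpha \neq 0$), so in particular $\Fix(R) \neq \emptyset$. Feeding the $\eta$-attracting inequality into the key equality above and using $\eta = (1-\alpha)/\alpha$ rearranges directly into $\norm{R(x)-v}^2 \leq \norm{x-v}^2$ for every $v \in \Fix(R)$, i.e., $R$ is quasi-nonexpansive.

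The converse $2 \Rightarrow 1$ runs exactly the same computation in reverse. Starting from $T = (1-\alpha)I + \alpha R$ with $R$ quasi-nonexpansive, again $\Fix(R) = \Fix(T)$, and for any $v \in \Fix(T)$ the quasi-nonexpansiveness bound $\norm{R(x)-v}^2 \leq \norm{x-v}^2$ substituted into the key equality gives
\begin{equation*}
\tfrac{1-\alpha}{\alpha}\norm{x-T(x)}^2 \leq \norm{x-v}^2 - \norm{T(x)-v}^2,
\end{equation*}
which with $\eta := (1-\alpha)/\alpha$ is precisely the $\eta$-attracting condition. The firmly quasi-nonexpansive specialization ($\eta = 1$, $\alpha = 1/2$) and the closing remark, that one may always manufacture a strongly attracting $T := (1-\alpha)I + \alpha R$ from an arbitrary quasi-nonexpansive $R$ with the same fixed-point set, are immediate consequences. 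There is no real obstacle beyond bookkeeping; the only delicate point is to set up the key equality cleanly so that the correspondence $\eta \leftrightarrow \alpha = 1/(1+\eta)$ makes both directions transparent.
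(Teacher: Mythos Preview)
Your argument is correct and is the standard route: the convex-combination identity in Hilbert space produces the key equality, and the correspondence $\eta = (1-\alpha)/\alpha$ makes both implications immediate. Note, however, that the paper does not supply a proof of this statement at all; it is recorded as a \emph{Fact} with citations to \cite{yo.hybrid.quasi.ne, vasin.ageev}, so there is no in-paper proof to compare against. Your write-up is exactly the kind of self-contained verification those references contain.
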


\begin{example}[Subgradient projection
    mapping]\label{def:subgrad.projection} Given a convex continuous
  function $\Theta$, such that $\lev\Theta :=
  \{v\in\hilbert:\ \Theta(v)\leq 0\} \neq \emptyset$, define the
  \textit{subgradient projection mapping $T_{\Theta}:
    \hilbert\rightarrow\hilbert$ with respect to $\Theta$} as follows:
\begin{equation*}
T_{\Theta}(x) := \begin{cases}
x - \frac{\Theta(x)}{\norm{\Theta'(x)}^2}
\Theta'(x), & \text{if}\ x\in\hilbert\setminus \lev\Theta,\\
x, & \text{if}\ x\in\lev\Theta,
\end{cases}
\end{equation*}
where $\Theta'(x)$ is any subgradient in $\partial\Theta(x)$. If $I$
stands for the identity mapping in $\hilbert$, the mapping
\begin{equation*}
T_{\Theta}^{(\lambda)}:= I + \lambda(T_{\Theta}- I),\quad \lambda\in(0,2),
\end{equation*}
will be called the \textit{relaxed subgradient projection mapping with
  respect to $\Theta$}. It can be verified that $\forall
\lambda\in(0,2)$, $\Fix(T_{\Theta}^{(\lambda)})= \Fix(T_{\Theta})
=\lev\Theta$ \cite{BauschkeCombettesWeak2Strong}. Moreover, $\forall
\lambda\in(0,2)$, the mapping $T_{\Theta}^{(\lambda)}$ is
$\frac{2-\lambda}{\lambda}$-attracting quasi-nonexpansive
\cite{BauschkeCombettesWeak2Strong}.
\end{example}

\begin{example}[Relaxed metric projection
    mapping]\label{ex:relaxed.projection} Let a nonempty closed convex
  set $C\subset\hilbert$ and its associated metric projection mapping
  $P_C$. Then, the relaxed (metric) projection mapping,
  $T_C^{(\alpha)}:= I + \alpha (P_C - I)$, $\alpha\in(0,2)$, is
  $\frac{2-\alpha}{\alpha}$-attracting nonexpansive with fixed point
  set $\Fix(T_C^{(\alpha)}) = C$ \cite{BauschkeBorwein}.
\end{example}

\begin{example}[\cite{YamadaOguraAPSMNFAO, BauschkeBorwein}]
\label{ex:compose.strongly.attracting}
Let $T_1, T_2$ be $\eta_1$- and
$\eta_2$-attracting (quasi)-nonexpansive mappings, respectively. Assume
also that $\Fix(T_1) \cap \Fix(T_2) \neq \emptyset$. Then, the mapping
$T_1T_2$ is $\frac{\eta_1\eta_2}{\eta_1 + \eta_2}$-attracting
(quasi)-nonexpansive, and $\Fix(T_1 T_2) = \Fix(T_1) \cap \Fix(T_2)$.
\end{example}

\begin{definition}[Demiclosed mapping at $0$]\label{def:demiclosed}
A mapping $T: \hilbert\rightarrow\hilbert$ will be called
\textit{demiclosed at $0$} if the following property holds; for a
sequence $(x_n)_{n\in\Natural}\subset\hilbert$, and an $x_*\in\hilbert$,
\begin{equation*}
\text{if}\ 
\begin{cases}
x_n \xrightharpoonup{n\rightarrow\infty} x_*,\\
T(x_n)\xrightarrow{n\rightarrow\infty} 0,
\end{cases}\ \text{then}\ T(x_*)=0,
\end{equation*}
where the symbols $\rightharpoonup$ and $\rightarrow$ denote weak and
strong convergence in $\hilbert$, respectively.
\end{definition}

\begin{example}[{\cite[Lem.\ 2]{Opial}}]\label{ex:nonexpansive.is.demiclosed}
If $T:\hilbert\rightarrow\hilbert$ is a nonexpansive mapping, then
$I-T$ is demiclosed at $0$.
\end{example}

\begin{example}[{\cite[Prop.\ 6.10]{BauschkeCombettesWeak2Strong}},
    \cite{vasin.ageev}]\label{ex:subgrad.projection.is.demiclosed.at.zero} 
Let a continuous convex function $\Theta: \hilbert \rightarrow \Real$
such that $\lev\Theta \neq\emptyset$. Then, $\forall\lambda\in(0,2)$,
the mapping $I-T_{\Theta}^{(\lambda)}$ is demiclosed at $0$, where
$T_{\Theta}^{(\lambda)}$ stands for the relaxed subgradient projection
mapping with respect to $\Theta$.
\end{example}

\begin{fact}[\cite{YamadaOguraAPSMNFAO}]\label{fact:special.Fejer}
Assume a sequence $(x_n)_{n\in\Natural}\subset\hilbert$, and a
closed convex set $C\subset\hilbert$. Assume that
\begin{equation*}
\exists\kappa>0:\ \forall v\in C,\ \forall n\in\Natural, \quad
\kappa\norm{x_{n+1} - x_n}^2 \leq \norm{x_n - v}^2 -
\norm{x_{n+1} - v}^2.
\end{equation*}
If there exists, also, a hyperplane $\Pi$ such that
$\relinterior_{\Pi}C\neq \emptyset$, then $\exists
x_*\in\hilbert$ such that $x_*=\lim_{n\rightarrow\infty} x_n$.
\end{fact}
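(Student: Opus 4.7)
The plan is to use the relative-interior condition in $\Pi$ to force the component of $(x_n)$ along the direction subspace of $\Pi$ to be strongly Cauchy, and then to resolve the remaining one-dimensional direction normal to $\Pi$ by a slow-variation argument.

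After replacing $\mathring v$ (if necessary) by any point of $B(\mathring v,\rho)\cap\Pi$, I may assume $\mathring v\in C\cap\Pi$ with $B(\mathring v,\rho)\cap\Pi\subset C$ for some $\rho>0$. Write $\Pi=\mathring v+V$, where $V$ is a closed subspace of $\hilbert$ of codimension $1$, fix a unit vector $a\in V^{\perp}$, and translate by $y_n:=x_n-\mathring v$ and $\tilde C:=C-\mathring v$. The hypothesis then reads $\kappa\norm{y_{n+1}-y_n}^{2}\leq\norm{y_n-\tilde v}^{2}-\norm{y_{n+1}-\tilde v}^{2}$ for every $\tilde v\in\tilde C$, with $0\in\tilde C$ and $B(0,\rho)\cap V\subset\tilde C$. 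Testing at $\tilde v=0$ shows that $(\norm{y_n}^{2})$ is non-increasing and converges to some $L_{0}\geq 0$, and summing yields $\sum_n\norm{y_{n+1}-y_n}^{2}<\infty$, so $y_{n+1}-y_n\to 0$ strongly.

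The crux of the argument --- and the step I expect to be the main obstacle until one notices the trick --- is a linear control on the $V$-part of the increments, obtained by feeding an entire slice of test points into the Fej\'er inequality. For every unit $u\in V$ and every $t\in(0,\rho)$, both $\pm tu$ lie in $\tilde C$; evaluating the Fej\'er inequality at $\tilde v=\pm tu$ and expanding $\norm{y_n\mp tu}^{2}=\norm{y_n}^{2}\mp 2t\innprod{y_n}{u}+t^{2}$ produces
\begin{equation*}
\norm{y_n}^{2}-\norm{y_{n+1}}^{2}\geq 2t\,|\innprod{y_n-y_{n+1}}{u}|.
\end{equation*}
Taking the supremum over unit $u\in V$ replaces the right-hand side by $2t\norm{(y_n-y_{n+1})^{V}}$, where $(\cdot)^{V}$ denotes the orthogonal projection onto $V$, and letting $t\uparrow\rho$ yields $\norm{y_n}^{2}-\norm{y_{n+1}}^{2}\geq 2\rho\norm{y_{n+1}^{V}-y_n^{V}}$. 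Summing in $n$, the left-hand side telescopes and is bounded by $\norm{y_0}^{2}$, so $\sum_n\norm{y_{n+1}^{V}-y_n^{V}}<\infty$; hence $(y_n^{V})$ is Cauchy in the closed subspace $V$ and converges strongly to some $z_{*}\in V$.

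It remains to control $t_n:=\innprod{y_n}{a}$. Since $y_n=y_n^{V}+t_n a$ and $\norm{y_n}^{2}=\norm{y_n^{V}}^{2}+t_n^{2}$, passing to the limit gives $t_n^{2}\to L_{0}-\norm{z_{*}}^{2}=:M^{2}$, so $|t_n|\to M$. Using $|t_{n+1}-t_n|\leq\norm{y_{n+1}-y_n}\to 0$: if $M>0$, a sign change between consecutive $t_n,t_{n+1}$ for large $n$ would force $|t_{n+1}-t_n|=|t_n|+|t_{n+1}|\to 2M>0$, a contradiction; thus $(t_n)$ is eventually of constant sign and converges to some $t_{*}\in\{\pm M\}$ (the case $M=0$ being immediate). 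Consequently $y_n\to z_{*}+t_{*} a$ strongly, and therefore $x_n\to x_{*}:=\mathring v+z_{*}+t_{*} a$ in $\hilbert$, as required.
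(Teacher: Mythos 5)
Your proposal is correct. Note that the manuscript itself gives no argument for this statement: it is imported as a Fact from the cited reference \cite{YamadaOguraAPSMNFAO}, so there is no in-paper proof to compare against. Your argument is a clean, self-contained proof along the lines one expects from that reference: it is the relative-interior refinement of the classical ``Fej\'{e}r monotone plus interior point implies strong convergence'' argument, where sweeping the test point over the slice $B(\mathring{v},\rho)\cap\Pi$ yields $\norm{y_n}^2-\norm{y_{n+1}}^2\geq 2\rho\norm{y_{n+1}^V-y_n^V}$ and hence absolute summability of the increments of the $\Pi$-parallel component, while the coefficient $\kappa$ (which is genuinely stronger than plain Fej\'{e}r monotonicity and gives $\norm{x_{n+1}-x_n}\to 0$) is used exactly where it is needed, namely to rule out sign oscillation of the normal coordinate $t_n$ once $|t_n|\to M$. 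All the individual steps check out: the reduction to $\mathring{v}\in C\cap\Pi$, the identity $\sup\{|\innprod{w}{u}|: u\in V,\ \norm{u}=1\}=\norm{P_V w}$, the telescoping bound, and the dichotomy $M=0$ versus $M>0$. Two harmless remarks for completeness: your decomposition $\hilbert=V\oplus\linspan\{a\}$ presupposes that the hyperplane $\Pi$ is closed, which is the intended reading here (if $\Pi$ were a dense non-closed hyperplane, then $B(\mathring{v},\rho)\cap\Pi\subset C$ with $C$ closed would force $B(\mathring{v},\rho)\subset C$, i.e., $\interior C\neq\emptyset$, and the simpler interior-point argument applies); and in the degenerate case $\dim\hilbert=1$ the set of unit vectors in $V$ is empty, but then $y_n^V\equiv 0$ and your treatment of $t_n$ alone already concludes.
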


\begin{definition}[Inner and outer limits \cite{Rockafellar.Wets,
      aubin.frankowska}]\label{def:inner.outer}
Given a sequence of subsets $(S_n)_{n\in\Natural}\subset \hilbert$,
define the inner and outer limits:
\begin{align}
\liminf_{n\rightarrow\infty} S_n & := \left\{x\in\hilbert: \exists N\in
\Natural_{\infty}, \exists x_n\in S_n, \forall n\in N,\ \text{such
  that}\ \lim_{n\in N}x_n=x\right\} \nonumber\\
& = \left\{x\in\hilbert: \limsup_{n\rightarrow\infty}
d(x,S_n)=0\right\} = \bigcap_{N\in \Natural_{\infty}^{\#}}
\overline{\bigcup_{n\in N} S_n}\label{liminf.def.1}\\
& = \bigcap_{\epsilon>0} \left[\bigcup_{n=1}^{\infty} \bigcap_{k=n}^{\infty}
\left(S_k+ B[0,\epsilon]\right)\right],\label{liminf.def.2}\\
\limsup_{n\rightarrow\infty} S_n & := \left\{x\in\hilbert: \exists N\in
\Natural_{\infty}^{\#}, \exists x_n\in S_n, \forall n\in N,\ \text{such
  that}\ \lim_{n\in N} x_n=x\right\} \nonumber\\
& = \left\{x\in\hilbert: \liminf_{n\rightarrow\infty}
d(x,S_n)=0\right\} = \bigcap_{N\in \Natural_{\infty}}
\overline{\bigcup_{n\in N} S_n}\nonumber\\
& = \bigcap_{\epsilon>0} \left[\bigcap_{n=1}^{\infty} \bigcup_{k=n}^{\infty}
\left(S_k+ B[0,\epsilon]\right)\right],\nonumber
\end{align}
where $S_k + B[0,\epsilon]:= \{s+b: s\in S_k, b\in B[0,\epsilon]\}$,
and the overline symbol stands for the closure of a set. In a similar
fashion, given a sequence of subsets $(S_n)_{n\in \Natural}$, and a
subsequence $N=(n_k)_{k\in\Natural}\in \Natural_{\infty}^{\#}$, the
notation $\liminf_{n\in N}S_n$ is defined as
$\liminf_{k\rightarrow\infty} S_{n_k}$. Likewise, $\limsup_{n\in
  N}S_n:= \limsup_{k\rightarrow\infty} S_{n_k}$.

\end{definition}

\section{The Analysis of the Algorithm}\label{sec:algo}

\subsection{A useful theorem.}\label{sec:useful.thm}

Prior to the analysis of Algorithm~\ref{the.algo}, we state and prove
Theorem~\ref{thm:liminf}, which will be repeatedly used in the
sequel. The proof of Theorem~\ref{thm:liminf} will be based on the
following assumption.

\begin{assumption}\label{ass:liminf.Fn}
Assume a sequence of mappings $(T_n:
\hilbert\rightarrow \hilbert)_{n\in\Natural}$ with nonempty fixed
point sets $(\Fix(T_n))_{n\in\Natural}$. For any subsequence $N\in
\Natural_{\infty}^{\#}$, for any sequence $(x_n)_{n\in
  N}\subset\hilbert$, and for any $\gamma>0$ such that $\forall n\in
N$, $d(x_n,\Fix(T_n))\geq \gamma$, there exists a $\delta>0$ such that
$\liminf_{n\in N} \norm{(I-T_n)(x_n)} \geq\delta$.
\end{assumption}

\begin{theorem}\label{thm:liminf}
Assume a sequence of mappings $(T_n: \hilbert\rightarrow
\hilbert)_{n\in\Natural}$, with nonempty fixed point sets
$(\Fix(T_n))_{n\in\Natural}$, such that Assumption~\ref{ass:liminf.Fn}
is satisfied.

\begin{enumerate}[1.]

\item\label{thm:liminf2} Assume a subsequence $N\in
  \Natural_{\infty}^{\#}$, a sequence $(x_n)_{n\in N}
  \subset\hilbert$ and an $x_*\in\hilbert$.
\begin{equation*}
\text{If}\ \begin{cases}
x_n \xrightarrow{n\in N} x_*,\\ 
(I-T_n)(x_n) \xrightarrow{n\in N} 0,
\end{cases}\ \text{then}\ x_*\in
\liminf_{n\in N} \Fix(T_n).
\end{equation*}

\item\label{thm:limsup} Let $\mathfrak{S}((x_n)_{n\in\Natural})$ be
  the set of all strong cluster points of a sequence
  $(x_n)_{n\in\Natural}$.
\begin{equation*}
\text{If}\ \begin{cases}
\mathfrak{S}((x_n)_{n\in\Natural})\neq \emptyset,\\ 
(I-T_{n})(x_n) \xrightarrow{n\rightarrow\infty} 0,
\end{cases}\ \text{then}\ \mathfrak{S}((x_n)_{n\in\Natural}) \subset
\limsup_{n\rightarrow\infty} \Fix(T_n).
\end{equation*}

\end{enumerate}
\end{theorem}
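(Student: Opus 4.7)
The plan is to prove the two parts sequentially, with part~\ref{thm:limsup} reducing to part~\ref{thm:liminf2}.

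\textbf{Part \ref{thm:liminf2} (the technical core).} The natural route is proof by contradiction using the characterization $\liminf_{n\in N}\Fix(T_n)=\{x: \limsup_{n\in N} d(x,\Fix(T_n))=0\}$ from Definition~\ref{def:inner.outer}. First I would assume, for contradiction, that $\limsup_{n\in N}d(x_*,\Fix(T_n))>0$; then there exist $\gamma>0$ and an infinite $N'\subset N$ with $d(x_*,\Fix(T_n))\geq 2\gamma$ for every $n\in N'$. Since $x_n\xrightarrow{n\in N}x_*$ strongly, the triangle inequality
\begin{equation*}
d(x_n,\Fix(T_n))\geq d(x_*,\Fix(T_n))-\norm{x_n-x_*}
\end{equation*}
yields $d(x_n,\Fix(T_n))\geq \gamma$ for all $n$ in a cofinite subset $N''\subset N'$, so in particular along an infinite tail $N''\in\Natural_\infty^{\#}$. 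At this point Assumption~\ref{ass:liminf.Fn}, applied to the subsequence $N''$ and the constant $\gamma$, provides a $\delta>0$ with $\liminf_{n\in N''}\norm{(I-T_n)(x_n)}\geq \delta$, directly contradicting the hypothesis $(I-T_n)(x_n)\xrightarrow{n\in N}0$.

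\textbf{Part \ref{thm:limsup} (corollary bookkeeping).} Pick any $x_*\in \mathfrak{S}((x_n)_{n\in\Natural})$. By definition of strong cluster point there is $N\in\Natural_\infty^{\#}$ with $x_n\xrightarrow{n\in N}x_*$. Since $(I-T_n)(x_n)\to 0$ on the whole sequence, it still tends to $0$ along $N$, so part~\ref{thm:liminf2} gives $x_*\in \liminf_{n\in N}\Fix(T_n)$. Unwinding the definition for a subsequence (Definition~\ref{def:inner.outer}), this produces an $M\in\Natural_\infty$ and elements $y_n\in\Fix(T_n)$ for $n\in N_M:=\{n_k: k\in M\}$ with $y_n\to x_*$. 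Because $N_M$ is infinite in $\Natural$, the selection $(y_n)_{n\in N_M}$ certifies $x_*\in\limsup_{n\to\infty}\Fix(T_n)$ by the outer-limit characterization.

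\textbf{Anticipated difficulty.} Nothing is deep here; the only delicate point is the precise index juggling between $\Natural_\infty$ and $\Natural_\infty^{\#}$ when combining Part~1 with the sub-subsequence produced by $\liminf_{n\in N}$. I would state once, for clarity, the standard fact that $\liminf_{n\in N}S_n\subset \limsup_{n\to\infty}S_n$ for any infinite $N$, so that Part~2 needs only the extraction step. The only use of hypotheses beyond basic definitions is the triangle inequality and Assumption~\ref{ass:liminf.Fn}; no properties of Hilbert space geometry (weak convergence, demiclosedness, etc.) are needed for this theorem.
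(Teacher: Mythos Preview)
Your proposal is correct and follows essentially the same route as the paper: Part~\ref{thm:liminf2} via contradiction from the $\limsup d(x_*,\Fix(T_n))$ characterization, triangle inequality to transfer the distance lower bound from $x_*$ to $x_n$ along a sub-subsequence, then Assumption~\ref{ass:liminf.Fn} for the contradiction; Part~\ref{thm:limsup} by extracting a converging subsequence, applying Part~\ref{thm:liminf2}, and unwinding $\liminf_{n\in N}\Fix(T_n)$ back into $\limsup_{n\to\infty}\Fix(T_n)$ via the $\Natural_\infty/\Natural_\infty^{\#}$ bookkeeping. The constants differ cosmetically (the paper uses $\tau$ and $\tau/2$ where you use $2\gamma$ and $\gamma$), but the argument is the same.
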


\begin{proof}
\begin{enumerate}[1.]

\item We will prove Theorem~\ref{thm:liminf}.\ref{thm:liminf2} by
  contradiction, i.e., assume that
  $x_*\notin \liminf_{n\in N} \Fix(T_n)$.

By \eqref{liminf.def.1}, $\limsup_{n\in N}d(x_*,\Fix(T_n))>0$, i.e.,
there exists $\tau >0$, and $\exists N'\in\Natural_{\infty}^{\#}$,
such that $\forall n\in N'\cap N$, $d(x_*, \Fix(T_n))>\tau$.

Moreover, since $\lim_{n\rightarrow\infty} x_n = x_*$, there exists an $N_0\in
\Natural_{\infty}$ such that $\forall n\in N_0\cap N$, $\norm{x_* - x_n} <
\frac{\tau}{2}$. Having these in mind, the triangle inequality
$\norm{x_*- v}\leq \norm{x_*- x_n} + \norm{x_n - v}$, $\forall
v\in\Fix(T_n)$, leads us to the following:
\begin{equation*}
\forall n\in N_0 \cap N'\cap N, \quad d(x_n,\Fix(T_n)) \geq
d(x_*,\Fix(T_n)) - \norm{x_*- x_n} > \tau - \frac{\tau}{2} =:
\gamma>0.
\end{equation*}
Hence, there exists a subsequence $N'':= N_0\cap N'\cap N\in
\Natural_{\infty}^{\#}$ such that $\forall n\in N''$,
$d(x_n,\Fix(T_n))\geq\gamma$.

Now, by Assumption~\ref{ass:liminf.Fn}, there exists a $\delta>0$ such
that 
\begin{equation*}
0< \delta\leq \liminf_{n\in N''} \norm{(I-T_n)(x_n)} = \lim_{n\in
  N''} \norm{(I-T_n)(x_n)} = 0,
\end{equation*}
where the last two equalities come
from the fact that $N''\subset N$. This contradiction establishes
Theorem~\ref{thm:liminf}.\ref{thm:liminf2}.

\item Choose arbitrarily an $x_* \in
  \mathfrak{S}((x_n)_{n\in\Natural})$. By definition, there exists a
  subsequence $N\in\Natural_{\infty}^{\#}$ such that $\lim_{n\in N}x_n
  = x_*$. Hence, by Theorem~\ref{thm:liminf}.\ref{thm:liminf2},
  $x_*\in \liminf_{n\in N} \Fix(T_n)$. By
  Definition~\ref{def:inner.outer}, $\exists N_0\in \Natural_{\infty}$
  and $\exists x_n'\in \Fix(T_n)$, $\forall n\in N\cap N_0$ such that
  $\lim_{n\in N\cap N_0} x_n' = x_*$.

  Clearly, $N':= N\cap N_0\in \Natural_{\infty}^{\#}$. In other words,
  $\exists N'\in \Natural_{\infty}^{\#}$, $\exists x_n'\in \Fix(T_n)$,
  $\forall n\in N'$ such that $\lim_{n\in N'} x_n' = x_*$, i.e.,
  $x_*\in \limsup_{n\rightarrow\infty}\Fix(T_n)$ by
  Definition~\ref{def:inner.outer}. Since $x_*$ was chosen
  arbitrarily, Theorem~\ref{thm:liminf}.\ref{thm:limsup} is
  established.\qedhere

\end{enumerate}
\end{proof}

Next is an example of a sequence of mappings which satisfies
Assumption~\ref{ass:liminf.Fn}, and which will be used later on in the
sequel. Another example of a family of mappings which satisfies
Assumption~\ref{ass:liminf.Fn}, and which relates to the minimization
of an $\ell_1$-norm loss function, will be seen in
Lemma~\ref{lem:l1.function}.\ref{lem:l1.liminf.ass}.

\begin{example}\label{ex:distance.function}
Assume a sequence of nonempty closed convex sets
$(S_n)_{n\in\Natural}$, the associated sequence of relaxed metric
projection mappings 
\begin{equation*}
T_{S_n}^{(\alpha_n)}:= I + \alpha_n(P_{S_n}-I), \quad \alpha_n\in(0,2),
\forall n\in\Natural,
\end{equation*}
and the existence of a sufficiently small $\epsilon>0$ such that
$\alpha_n\in [\epsilon,2)$, $\forall n\in\Natural$. Then, the sequence
of mappings $(T_{S_n}^{(\alpha_n)})_{n\in\Natural}$ satisfies
Assumption~\ref{ass:liminf.Fn}.
\end{example}

\begin{proof}
First of all, by Example~\ref{ex:relaxed.projection}, $\forall
n\in\Natural$, $\Fix(T_{S_n}^{(\alpha_n)})=S_n$. Choose, now,
arbitrarily an $N\in \Natural_{\infty}^{\#}$, a sequence $(x_n)_{n\in
  N}\subset \hilbert$, and a $\gamma>0$, such that $\forall n\in N$,
$d(x_n, \Fix(T_{S_n}^{(\alpha_n)}))= d(x_n,S_n) \geq \gamma$. Then, it
is easy to verify by the definition of $T_{S_n}^{(\alpha_n)}$ that
\begin{equation*}
\forall n\in N, \quad \norm{(I-T_{S_n}^{(\alpha_n)})(x_n)} = \alpha_n
d(x_n,S_n)\geq \epsilon\gamma>0. 
\end{equation*}
Therefore, there exists a $\delta>0$ such that $\liminf_{n\in N}
\norm{(I- T_{S_n}^{(\alpha_n)})(x_n)} \geq \delta$, and
Assumption~\ref{ass:liminf.Fn} is established.
\end{proof}

\subsection{The Main Analysis}\label{sec:main.analysis}

Given a sequence of convex, continuous, and not necessarily
differentiable functions $(\Theta_n: \hilbert\rightarrow
[0,\infty))_{n\in\Natural}$, and a sequence of $\eta_n$-attracting
  quasi-nonexpansive mappings $(T_n: \hilbert \rightarrow
  \hilbert)_{n\in\Natural}$, with $\eta_n>0$, $\forall n\in\Natural$,
  and with nonempty fixed point sets $(\Fix(T_n))_{n\in\Natural}$, the
  convergence analysis of Algorithm~\ref{the.algo}, given in 
Theorem~\ref{thm:the.analysis}, will be based on the following series
of assumptions.

\begin{assumption}\label{main.assumptions}\mbox{}
\begin{enumerate}[1.]

\item\label{ass:nonempty.Omega_n} There exists an $N\in \Natural_{\infty}$
  such that $\forall n\in N$, $\Omega_n:= \Fix(T_n) \cap
  \lev\Theta_n \neq\emptyset$.

\item\label{ass:nonempty.intersection} There exists an
  $N\in\Natural_{\infty}$ such that $\Omega := \bigcap_{n\in N}
  \Omega_n \neq\emptyset$.

\item\label{ass:lambda.epsilon} Choose an 
$\epsilon\in(0,1]$, and let $\forall n\in\Natural$, $\lambda_n\in
  [\epsilon,2-\epsilon]$.

\item\label{ass:bounded.subgradients} The sequence
  $(\Theta_n'(u_n))_{n\in\Natural}$ is bounded.

\item\label{ass:eta} Define $\check{\eta} := \inf\{\eta_n:\
     n\in\Natural\}$, $\hat{\eta} := \sup\{\eta_n:\
     n\in\Natural\}$. Then, assume that $\check{\eta} >0$ and
     $\hat{\eta}<\infty$.

\item\label{ass:extra.on.Theta_n} The sequence of relaxed
  subgradient projection mappings
  $(T_{\Theta_n}^{(\lambda_n)})_{n\in\Natural}$ satisfies
  Assumption~\ref{ass:liminf.Fn}.  

\item\label{ass:extra.on.T_n} The sequence of mappings
  $(T_n)_{n\in\Natural}$ satisfies Assumption~\ref{ass:liminf.Fn}.

\item\label{ass:demiclosed} Assume that $\forall n\in\Natural$, $T_n:=
  T$, where $T$ is a strongly attracting quasi-nonexpansive mapping
  with $\Fix(T)\neq \emptyset$, and $I-T$ is demiclosed at $0$.

\item\label{ass:strong.clusters} The set
  $\mathfrak{S}((u_n)_{n\in\Natural})$ of all strong cluster points of
  the sequence $(u_n)_{n\in\Natural}$ is nonempty.

\item\label{ass:ri.Omega.hyperplane} There exists a hyperplane $\Pi$ such
  that $\relinterior_{\Pi}(\Omega) \neq \emptyset$.

\end{enumerate}
\end{assumption}

\begin{theorem}[Properties of
    Algorithm~\ref{the.algo}]\label{thm:the.analysis}
\mbox{}

\begin{enumerate}[1.]

\item\label{thm:monotone.Omega_n} Let
  Assumption~\ref{main.assumptions}.\ref{ass:nonempty.Omega_n} hold
  true. Then, $\forall n\in N$, $d(u_{n+1},\Omega_n) \leq
  d(u_n,\Omega_n)$.

\item\label{thm:monotone.Omega} Let
  Assumption~\ref{main.assumptions}.\ref{ass:nonempty.intersection}
  hold true. Then, $\forall n\in N$, $d(u_{n+1},\Omega) \leq
  d(u_n,\Omega)$. 

\item\label{thm:convergent.sequence} Let
  Assumption~\ref{main.assumptions}.\ref{ass:nonempty.intersection}
  hold true. Then, $\forall v\in\Omega$, the sequence
  $(\norm{u_n-v})_{n\in\Natural}$ converges. 

\item\label{thm:nonempty.weak.cluster.points} Let
  Assumption~\ref{main.assumptions}.\ref{ass:nonempty.intersection}
  hold true. Then, the set of all weakly sequential cluster points of
  the sequence $(u_n)_{n\in\Natural}$ is nonempty, i.e.,
  $\mathfrak{W}((u_n)_{n\in\Natural})\neq \emptyset$.

\item\label{thm:fraction.goes.to.zero} Let
  Assumptions~\ref{main.assumptions}.\ref{ass:nonempty.intersection} and
  \ref{main.assumptions}.\ref{ass:lambda.epsilon} hold true. Then,
\begin{equation*}
\lim_{n\rightarrow\infty} \norm{(I-T_{\Theta_n}^{(\lambda_n)})(u_n)} =
\lim_{n\rightarrow\infty} \frac{\Theta_n(u_n)}{\norm{\Theta'_n(u_n)}}
=0,
\end{equation*}
where, in order to avoid ambiguities, we let $\frac{0}{0} :=0$.

\item\label{asymptotic.optimality} Let
  Assumptions~\ref{main.assumptions}.\ref{ass:nonempty.intersection},
  \ref{main.assumptions}.\ref{ass:lambda.epsilon}, and
  \ref{main.assumptions}.\ref{ass:bounded.subgradients} hold
  true. Then, $\lim_{n\rightarrow\infty} \Theta_n(u_n)=0$.

\item\label{thm:liminf.Psi} Let
  Assumptions~\ref{main.assumptions}.\ref{ass:nonempty.intersection},
  \ref{main.assumptions}.\ref{ass:lambda.epsilon},
  \ref{main.assumptions}.\ref{ass:extra.on.Theta_n}, and
  \ref{main.assumptions}.\ref{ass:strong.clusters} hold true. Then,
  $\mathfrak{S}((u_n)_{n\in\Natural})\subset
  \limsup_{n\rightarrow\infty} \lev\Theta_n$. If, in addition, the set
  $\mathfrak{S}((u_n)_{n\in\Natural})$ is a singleton, i.e., there
  exists a $u_*$ such that $\{u_*\} =
  \mathfrak{S}((u_n)_{n\in\Natural})$, then, $u_*\in
  \liminf_{n\rightarrow\infty}\lev\Theta_n$.  

\item\label{thm:T_n.demiclosed} Let
  Assumptions~\ref{main.assumptions}.\ref{ass:nonempty.intersection}
  and \ref{main.assumptions}.\ref{ass:eta} hold true. Then,
  $\lim_{n\rightarrow\infty}(I- T_n)(T_{\Theta_n}^{(\lambda_n)}(u_n))
  = 0$.

\item\label{thm:liminf.T_n} Let
  Assumptions~\ref{main.assumptions}.\ref{ass:nonempty.intersection},
  \ref{main.assumptions}.\ref{ass:lambda.epsilon},
  \ref{main.assumptions}.\ref{ass:eta},
  \ref{main.assumptions}.\ref{ass:extra.on.T_n}, and
  \ref{main.assumptions}.\ref{ass:strong.clusters} hold true. Then,
  $\mathfrak{S}((u_n)_{n\in\Natural})\subset
  \limsup_{n\rightarrow\infty}\Fix(T_n)$. If, in addition, the set
  $\mathfrak{S}((u_n)_{n\in\Natural})$ is a singleton, i.e., there
  exists a $u_*$ such that $\{u_*\} =
  \mathfrak{S}((u_n)_{n\in\Natural})$, then, $u_*\in
  \liminf_{n\rightarrow\infty}\Fix(T_n)$.  

\item\label{thm:weak.cluster.points.in.fixT} Let
  Assumptions~\ref{main.assumptions}.\ref{ass:nonempty.intersection},
  \ref{main.assumptions}.\ref{ass:lambda.epsilon}, and
  \ref{main.assumptions}.\ref{ass:demiclosed} hold true. Then,
  $\mathfrak{W}((u_n)_{n\in\Natural}) \subset\Fix(T)$. 

\item\label{thm:strong.cluster.points.in.fixT} Let
  Assumptions~\ref{main.assumptions}.\ref{ass:nonempty.intersection},
  \ref{main.assumptions}.\ref{ass:lambda.epsilon},
  \ref{main.assumptions}.\ref{ass:demiclosed}, and
  \ref{main.assumptions}.\ref{ass:strong.clusters} hold true. Then,
  $\mathfrak{S}((u_n)_{n\in\Natural}) \subset\Fix(T)$.

\item\label{thm:strong.convergence} Let
  Assumptions~\ref{main.assumptions}.\ref{ass:nonempty.intersection},
  \ref{main.assumptions}.\ref{ass:lambda.epsilon},
  \ref{main.assumptions}.\ref{ass:eta}, and
  \ref{main.assumptions}.\ref{ass:ri.Omega.hyperplane} hold
  true. Then, $\exists
  u_*\in\hilbert:\ \lim_{n\rightarrow\infty}u_n=u_*$, i.e., $
  \mathfrak{S}((u_n)_{n\in\Natural})=\{u_*\}$.

\end{enumerate}
\end{theorem}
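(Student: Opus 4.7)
The plan is to extract a Fejér-type decrease inequality for $(u_n)_{n\in\Natural}$ with respect to $\Omega$ and then invoke Fact~\ref{fact:special.Fejer} using the hyperplane $\Pi$ supplied by Assumption~\ref{main.assumptions}.\ref{ass:ri.Omega.hyperplane}. The starting observation is that the recursion \eqref{basic.recursion} can be rewritten uniformly as
\begin{equation*}
u_{n+1} = T_n\bigl(T_{\Theta_n}^{(\lambda_n)}(u_n)\bigr), \quad \forall n\in\Natural,
\end{equation*}
since in the branch $\Theta_n'(u_n)=0$, the last remark of Definition~\ref{def:subgradient} places $u_n$ in $\argmin \Theta_n$; because $\Theta_n\geq 0$ and, for $n\in N$ coming from Assumption~\ref{main.assumptions}.\ref{ass:nonempty.intersection}, $\lev \Theta_n\neq\emptyset$, the minimum is $0$, so $u_n\in \lev \Theta_n =\Fix(T_{\Theta_n}^{(\lambda_n)})$ and $T_{\Theta_n}^{(\lambda_n)}(u_n)=u_n$.

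Next, Example~\ref{def:subgrad.projection} gives that $T_{\Theta_n}^{(\lambda_n)}$ is $\frac{2-\lambda_n}{\lambda_n}$-attracting quasi-nonexpansive with $\Fix(T_{\Theta_n}^{(\lambda_n)}) = \lev \Theta_n$. Since $T_n$ is $\eta_n$-attracting quasi-nonexpansive and, by Assumption~\ref{main.assumptions}.\ref{ass:nonempty.intersection}, $\Fix(T_n)\cap \lev\Theta_n =\Omega_n\supset\Omega\neq\emptyset$ for every $n\in N$, Example~\ref{ex:compose.strongly.attracting} applies to yield that the composition $T_n T_{\Theta_n}^{(\lambda_n)}$ is $\kappa_n$-attracting quasi-nonexpansive with $\Fix(T_n T_{\Theta_n}^{(\lambda_n)}) = \Omega_n$, where
\begin{equation*}
\kappa_n := \frac{\eta_n \,\tfrac{2-\lambda_n}{\lambda_n}}{\eta_n + \tfrac{2-\lambda_n}{\lambda_n}}.
\end{equation*}
Unwrapping the attracting inequality against any $v\in\Omega\subset\Omega_n$ gives $\kappa_n\,\|u_{n+1}-u_n\|^2 \leq \|u_n-v\|^2 - \|u_{n+1}-v\|^2$ for all $n\in N$.

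To ensure the constant in the Fejér inequality is uniform, observe that by Assumption~\ref{main.assumptions}.\ref{ass:lambda.epsilon} the ratio $\frac{2-\lambda_n}{\lambda_n}$ lies in the positive compact interval $[\tfrac{\epsilon}{2-\epsilon},\tfrac{2-\epsilon}{\epsilon}]$, while Assumption~\ref{main.assumptions}.\ref{ass:eta} confines $\eta_n$ to $[\check\eta,\hat\eta]\subset(0,\infty)$. As $(s,t)\mapsto \frac{st}{s+t}$ is increasing in each positive argument, $\kappa_n$ is bounded below by a universal constant $\kappa>0$, giving
\begin{equation*}
\kappa\,\|u_{n+1}-u_n\|^2 \leq \|u_n-v\|^2 - \|u_{n+1}-v\|^2, \quad \forall v\in\Omega, \; \forall n\in N.
\end{equation*}
Since $N\in \Natural_\infty$, this holds for every $n\geq n_0$ for some $n_0$. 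Together with the fact that $\Omega$ is closed and convex (each $\Fix(T_n)$ is closed convex by Definition~\ref{def:quasi-nonexpansive}, each $\lev\Theta_n$ is closed convex since $\Theta_n$ is continuous and convex, and intersections inherit both properties), the tail $(u_n)_{n\geq n_0}$ and the set $C:=\Omega$ meet the hypotheses of Fact~\ref{fact:special.Fejer}, which, via the hyperplane $\Pi$ from Assumption~\ref{main.assumptions}.\ref{ass:ri.Omega.hyperplane}, delivers an $u_*\in\hilbert$ with $u_n\to u_*$ strongly; hence $\mathfrak{S}((u_n)_{n\in\Natural})=\{u_*\}$.

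The only subtle point is the uniform lower bound on $\kappa_n$ and the careful packaging of the two-mapping composition so that the Fejér step is available with a single closed convex set $\Omega$; the remaining ingredients are direct applications of the quasi-nonexpansive calculus collected in Section~\ref{sec:junbi}, with the restriction ``$n\in N$'' (cofinite) absorbed by passing to a tail, which does not affect strong convergence.
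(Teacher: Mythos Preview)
Your argument is correct and follows essentially the same route as the paper: both rewrite the iteration as $u_{n+1}=T_nT_{\Theta_n}^{(\lambda_n)}(u_n)$, use Example~\ref{ex:compose.strongly.attracting} to obtain the attracting inequality \eqref{strongly.attracting.Omega_n} with constant $\kappa_n=\dfrac{(2-\lambda_n)\eta_n}{2-\lambda_n(1-\eta_n)}$ (your expression is algebraically identical), bound $\kappa_n$ uniformly from below via Assumptions~\ref{main.assumptions}.\ref{ass:lambda.epsilon} and \ref{main.assumptions}.\ref{ass:eta}, and then invoke Fact~\ref{fact:special.Fejer}. Your monotonicity argument for the lower bound is slightly cleaner than the paper's direct estimate $\kappa_n\geq \epsilon\check\eta/(2(1+\hat\eta))$, and in fact shows that $\hat\eta<\infty$ is not strictly needed here, but otherwise the proofs coincide.
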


\begin{proof}
\begin{enumerate}[1.]

\item By assumption~\ref{main.assumptions}.\ref{ass:nonempty.Omega_n},
  $\forall n\in N$, $\lev\Theta_n \neq 
  \emptyset$. Recall also the fundamental fact that $0\in
  \partial\Theta_n(u_n) \Leftrightarrow u_n\in\argmin_{v\in\hilbert}
  \Theta_n(v)$.

  Fix any $n\in N$. Consider the case where $u_n\notin \lev\Theta_n
  \Leftrightarrow \Theta_n(u_n)>0 \Rightarrow \Theta_n'(u_n)\neq
  0$. Then, by \eqref{basic.recursion}, $u_{n+1}=T_n\left(u_n - \lambda_n
  \frac{\Theta_n(u_n)}{\norm{\Theta_n'(u_n)}^2} \Theta_n'(u_n)
  \right)$. Now, assume that $u_n\in \lev\Theta_n \Leftrightarrow
  \Theta_n(u_n)=0$. If $\Theta_n'(u_n)=0$, then by
  \eqref{basic.recursion}, $u_{n+1}= T_n(u_n)$. On the other hand, if
  $\Theta_n'(u_n)\neq 0$, then, again, $u_{n+1}=T_n(u_n)$, since
  $\Theta_n(u_n)=0$. To summarize, \eqref{basic.recursion} takes the
  following form:
  \begin{equation*}
    \forall n\in N,\quad u_{n+1} := \begin{cases}
      T_n\left(u_n - \lambda_n
      \frac{\Theta_n(u_n)}{\norm{\Theta_n'(u_n)}^2}
      \Theta_n'(u_n) \right), & \text{if}\ u_n\notin\lev\Theta_n,\\
      T_n(u_n), & \text{if}\ u_n\in\lev\Theta_n.
    \end{cases}
  \end{equation*}
  If we combine this result with
  Example~\ref{def:subgrad.projection}, then it can be easily
  verified that the previous recursion can be equivalently viewed as
  follows: $\forall n\in N$, $u_{n+1} = T_n
  T_{\Theta_n}^{(\lambda_n)}(u_n)$, where $T_{\Theta_n}^{(\lambda_n)}$
  stands for the relaxed subgradient projection mapping
  w.r.t.\ $\Theta_n$.

  Now, since $T_{\Theta_n}^{(\lambda_n)}$ is a
  $\frac{2-\lambda_n}{\lambda_n}$-attracting quasi-nonexpansive
  mapping, with $\Fix(T_{\Theta_n}^{(\lambda_n)})= \lev\Theta_n$, it
  can be easily verified by
  Example~\ref{ex:compose.strongly.attracting} that the mapping $T_n
  T_{\Theta_n}^{(\lambda_n)}$ is
  $\frac{(2-\lambda_n)\eta_n}{2-\lambda_n(1-\eta_n)}$-attracting
  quasi-nonexpansive, with $\Fix(T_n T_{\Theta_n}^{(\lambda_n)})=
  \Fix(T_n)\cap \Fix(T_{\Theta_n}^{(\lambda_n)}) = \Fix(T_n)\cap
  \lev\Theta_n = \Omega_n$, $\forall n\in N$. Hence, by
  Definition~\ref{def:quasi-nonexpansive}, we have that $\forall n\in
  N$, $\forall v\in\Omega_n$,
\begin{align}
0 & \leq \frac{(2-\lambda_n)\eta_n}{2-\lambda_n(1-\eta_n)} \norm{u_n -
  u_{n+1}}^2 = \frac{(2-\lambda_n)\eta_n}{2-\lambda_n(1-\eta_n)}
\norm{u_n - T_n T_{\Theta_n}^{(\lambda_n)}(u_n)}^2 \nonumber\\
& \leq \norm{u_n - v}^2 - \norm{T_nT_{\Theta_n}^{(\lambda_n)}(u_n) -
  v}^2 = \norm{u_n - v}^2  -
\norm{u_{n+1} - v}^2\label{strongly.attracting.Omega_n}\\
& \Rightarrow \norm{u_{n+1} - v} \leq \norm{u_n -
  v}.\label{monotonicity}
\end{align}
If we apply $\inf_{v\in\Omega_n}$, on both sides of
\eqref{monotonicity}, then we obtain
Theorem~\ref{thm:the.analysis}.\ref{thm:monotone.Omega_n}.

\item Due to
  Assumption~\ref{main.assumptions}.\ref{ass:nonempty.intersection},
  to the fact that $\Omega$ is closed and convex, to
  $P_{\Omega}(u_n)\in \Omega\subset\Omega_n$, $\forall n\in N$, and to
  \eqref{monotonicity}, we have:
\begin{align}
\forall n\in N, \quad d(u_n,\Omega) & = \norm{u_n
  - P_{\Omega}(u_n)} \geq \norm{u_{n+1} - P_{\Omega}(u_n)} \nonumber\\
& \geq \norm{u_{n+1} - P_{\Omega}(u_{n+1})} =
d(u_{n+1},\Omega), \label{monotonicity.Omega}
\end{align}
which is nothing but
Theorem~\ref{thm:the.analysis}.\ref{thm:monotone.Omega}.

\item Fix arbitrarily $v\in\Omega$. By \eqref{monotonicity}, the
  sequence $(\norm{u_n-v})_{n\in N}$ is non-increasing and
  bounded; hence convergent. This establishes
  Theorem~\ref{thm:the.analysis}.\ref{thm:convergent.sequence}.

\item Since $(u_n)_{n\in\Natural}$ is bounded by
  Theorem~\ref{thm:the.analysis}.\ref{thm:convergent.sequence},
  $\mathfrak{W}((u_n)_{n\in\Natural})\neq \emptyset$
  \cite[Thm.\ 9.12]{DeutschBook}. This establishes
  Theorem~\ref{thm:the.analysis}.\ref{thm:nonempty.weak.cluster.points}.

\item There is no loss of generality if we assume that $\forall
  n\in N$, $\Theta_n'(u_n)\neq 0$. To see this, notice that for
  all $n\in N$ such that $\Theta'_n(u_n) = 0$, we obtain
  $\Theta_n(u_n) = 0 \Rightarrow
  \frac{\Theta_n(u_n)}{\norm{\Theta'_n(u_n)}} = \frac{0}{0} :=0$. Hence,
  in such a case, the claim of
  Theorem~\ref{thm:the.analysis}.\ref{thm:fraction.goes.to.zero} holds
  true.

Assume, now, any $v\in\Omega$. Recall also that the mapping $T_n$ is
quasi-nonexpansive, with $\Omega\subset \Fix(T_n)$, $\forall n\in
N$, and easily verify $\forall n\in N$, $\forall v\in\Omega$,
\begin{align}
\norm{u_{n+1} - v}^2 & = \norm{ T_{n}\left(u_{n} -
\lambda_{n}
\frac{\Theta_{n}(u_{n})}{\norm{\Theta_{n}'(u_{n})}^2}
\Theta_{n}'(u_{n}) \right) - v}^2 \leq \norm{u_{n} - \lambda_{n}
\frac{\Theta_{n}(u_{n})}{\norm{\Theta_{n}'(u_{n})}^2}
\Theta_{n}'(u_{n}) - v}^2\nonumber\\
& = \norm{(u_{n}-v) - \lambda_{n}
\frac{\Theta_{n}(u_{n})}{\norm{\Theta_{n}'(u_{n})}^2}
\Theta_{n}'(u_{n})}^2\nonumber\\
& = \norm{u_{n} - v}^2 + \lambda_{n}^2
\frac{\Theta_{n}^2(u_{n})}{\norm{\Theta_{n}'(u_{n})}^2}
- 2 \lambda_{n}
\frac{\Theta_{n}(u_{n})}{\norm{\Theta_{n}'(u_{n})}^2}
\innprod{u_{n} - v}{\Theta_{n}'(u_{n})}.
\label{before.using.subgrad}
\end{align}
By the definition of the subgradient, we have that
$\innprod{v-u_{n}}{\Theta_{n}'(u_{n})} + \Theta_{n}(u_{n})
\leq \Theta_{n}(v) = 0$. If we merge this into
\eqref{before.using.subgrad}, we obtain the following:
\begin{equation*}
\norm{u_{n+1} - v}^2 \leq \norm{u_{n} - v}^2 +
 \lambda_{n}^2
 \frac{\Theta_{n}^2(u_{n})}{\norm{\Theta_{n}'(u_{n})}^2}
 - 2 \lambda_{n}
 \frac{\Theta_{n}^2(u_{n})}{\norm{\Theta_{n}'(u_{n})}^2} 
 = \norm{u_{n} - v}^2 -
 \lambda_{n} (2-\lambda_{n})
 \frac{\Theta_{n}^2(u_{n})}{\norm{\Theta_{n}'(u_{n})}^2}.
\end{equation*}
This implies in turn that
\begin{equation}
\forall n\in N, \forall v\in\Omega, \quad 0\leq
\frac{\Theta_{n}^2(u_{n})}{\norm{\Theta_{n}'(u_{n})}^2} \leq
\frac{\lambda_{n} (2-\lambda_{n})}{\epsilon^2}
\frac{\Theta_{n}^2(u_{n})}{\norm{\Theta_{n}'(u_{n})}^2} \nonumber \leq
\frac{\norm{u_{n} - v}^2 - \norm{u_{n+1} -
    v}^2}{\epsilon^2}.\label{root.of.contradiction}
\end{equation}
However, by
Theorem~\ref{thm:the.analysis}.\ref{thm:convergent.sequence}, the 
sequence $(\norm{u_n - v}^2)_{n\in\Natural}$ is convergent, and hence
Cauchy. The definition of a Cauchy sequence implies
that $\lim_{n\rightarrow\infty} (\norm{u_{n} - v}^2 -
\norm{u_{n+1} - v}^2)=0$. This fact and the previous inequality
establish
$\lim_{n\rightarrow\infty}\frac{\Theta_n(u_n)}{\norm{\Theta_n'(u_n)}}
= 0$.

Now, notice that for all $n\in N$:
\begin{equation*}
\norm{u_n - T_{\Theta_n}^{(\lambda_n)}(u_n)} = \lambda_n
\frac{\Theta_n(u_n)}{\norm{\Theta_n'(u_n)}} \leq 2
\frac{\Theta_n(u_n)}{\norm{\Theta_n'(u_n)}}.
\end{equation*}
Take $\lim_{n\rightarrow\infty}$ on both sides of this inequality, and
recall the previous result to easily verify that
\begin{equation*}
\lim_{n\rightarrow\infty} \norm{u_n -
  T_{\Theta_n}^{(\lambda_n)}(u_n)} = 0.
\end{equation*}
In other words,
Theorem~\ref{thm:the.analysis}.\ref{thm:fraction.goes.to.zero} holds
true.

\item Since the sequence $(\Theta_n'(u_n))_{n\in\Natural}$ is assumed
  bounded, there exists a $D>0$ such that $\forall n\in\Natural$,
  $\norm{\Theta_n'(u_n)}\leq D$. Notice, now, that for all those
  $n\in\Natural$ such that $\Theta_n'(u_n)\neq 0$, we have
\begin{equation}
\Theta_n(u_n) = \norm{\Theta_n'(u_n)}
\frac{\Theta_n(u_n)}{\norm{\Theta_n'(u_n)}} \leq D
\frac{\Theta_n(u_n)}{\norm{\Theta_n'(u_n)}}.
\label{bounded.subgradient.prio.taking.the.limit}  
\end{equation}
Moreover, for all those $n\in N$ such that $\Theta_n'(u_n)= 0$,
it is clear by the well-known fact $0\in\partial\Theta_n(u_n)
\Leftrightarrow u_n\in\argmin_{v\in\hilbert} \Theta_n(v)$, that
$\Theta_n(u_n)=0$. If we take $\lim_{n\rightarrow\infty}$ on
both sides of \eqref{bounded.subgradient.prio.taking.the.limit}, and
if we also recall
Theorem~\ref{thm:the.analysis}.\ref{thm:fraction.goes.to.zero}, the
claim is established.

\item Notice that $\forall n\in N$, $\Fix(T_{\Theta_n}^{(\lambda_n)})
  = \lev\Theta_n$. Hence, $\mathfrak{S}((u_n)_{n\in\Natural}) \subset
  \limsup_{n\rightarrow\infty} \lev\Theta_n$ is a direct consequence
  of Theorems~\ref{thm:liminf} and
  \ref{thm:the.analysis}.\ref{thm:fraction.goes.to.zero}. The claim
  for the case of $\mathfrak{S}((u_n)_{n\in\Natural}) = \{u_*\}$ can
  be easily obtained if we let $N:=\Natural$ in
  Theorem~\ref{thm:liminf}.\ref{thm:liminf2}.

\item Here we will use Definition~\ref{def:quasi-nonexpansive} two
  times; one for the mapping $T_n$, and one for
  $T_{\Theta_n}^{(\lambda_n)}$. In other words, $\forall n\in N$,
  $\forall v\in\Omega$,
\begin{align*}
\check{\eta} \norm{(I-T_n) (T_{\Theta_n}^{(\lambda_n)}(u_n))}^2 & =
 \check{\eta} \norm{T_{\Theta_n}^{(\lambda_n)}(u_n) -
 T_nT_{\Theta_n}^{(\lambda_n)}(u_n)}^2 \leq \eta_n
 \norm{T_{\Theta_n}^{(\lambda_n)}(u_n) -
 T_nT_{\Theta_n}^{(\lambda_n)}(u_n)}^2 \\ & \leq
 \norm{T_{\Theta_n}^{(\lambda_n)}(u_n) - v}^2 -
 \norm{T_nT_{\Theta_n}^{(\lambda_n)}(u_n) - v}^2 \\ & =
 \norm{T_{\Theta_n}^{(\lambda_n)}(u_n) - v}^2 - \norm{u_{n+1}
 - v}^2 \\ & \leq \norm{u_n - v}^2 -
 \frac{2-\lambda_n}{\lambda_n} \norm{u_n -
 T_{\Theta_n}^{(\lambda_n)}(u_n)}^2 -
 \norm{u_{n+1} - v}^2\\ & \leq \norm{u_n - v}^2 -
 \norm{u_{n+1} - v}^2.
\end{align*}
Divide the above inequality by $\check{\eta}>0$,
recall Theorem~\ref{thm:the.analysis}.\ref{thm:convergent.sequence},
and take $\lim_{n\rightarrow\infty}$ on both sides of the resulting inequality
to obtain $\lim_{n\rightarrow\infty}
(I-T_n)(T_{\Theta_n}^{(\lambda_n)}(u_n)) = 0$. This establishes
Theorem~\ref{thm:the.analysis}.\ref{thm:T_n.demiclosed}.

\item First, since $\mathfrak{S}((u_n)_{n\in\Natural}) \neq
  \emptyset$, notice that
  $\mathfrak{S}((T_{\Theta_n}^{(\lambda_n)}(u_n))_{n\in\Natural}) =
  \mathfrak{S}((u_n)_{n\in\Natural})$. To establish, for example,
  $\mathfrak{S}((u_n)_{n\in\Natural}) \subset
  \mathfrak{S}((T_{\Theta_n}^{(\lambda_n)}(u_n))_{n\in\Natural})$,
  choose arbitrarily a $u_* \in \mathfrak{S}((u_n)_{n\in\Natural})$,
  which implies that there exists a subsequence
  $N'\in\Natural_{\infty}^{\#}$ such that $\lim_{n\in
    N'}u_n=u_*$. Then, it is easy to verify that
\begin{equation*}
\forall n\in N', \quad \norm{u_* -
  T_{\Theta_n}^{(\lambda_n)}(u_n)} \leq \norm{u_* - u_n} +
\norm{(I-T_{\Theta_n}^{(\lambda_n)})(u_n)}.
\end{equation*}
Take $\lim_{n\in N'}$ on both sides of the previous inequality, so
that the following result is obtained by
Theorem~\ref{thm:the.analysis}.\ref{thm:fraction.goes.to.zero}: $u_*
\in
\mathfrak{S}((T_{\Theta_n}^{(\lambda_n)}(u_n))_{n\in\Natural})$. Similar
arguments can be used in order to derive
$\mathfrak{S}((T_{\Theta_n}^{(\lambda_n)}(u_n))_{n\in\Natural})
\subset \mathfrak{S}((u_n)_{n\in\Natural})$.

Now, it becomes clear under the previous discussion, that if we define
$x_n:= T_{\Theta_n}^{(\lambda_n)}(u_n)$, $\forall n\in\Natural$, in
Theorem~\ref{thm:liminf}, then
Theorem~\ref{thm:the.analysis}.\ref{thm:liminf.T_n} becomes a direct
consequence of Theorems~\ref{thm:liminf} and
\ref{thm:the.analysis}.\ref{thm:T_n.demiclosed}.

\item
  Theorem~\ref{thm:the.analysis}.\ref{thm:nonempty.weak.cluster.points}
  guarantees that $\mathfrak{W}((u_n)_{n\in\Natural}) \neq
  \emptyset$. Fix arbitrarily a $u_*\in
  \mathfrak{W}((u_n)_{n\in\Natural})$. By definition, there exists a
  subsequence $N'\in \Natural_{\infty}^{\#}$ such that $u_n
  \xrightharpoonup{n\in N'} u_*$.

Recall Theorem~\ref{thm:the.analysis}.\ref{thm:fraction.goes.to.zero}
and easily verify that $u_n -
T_{\Theta_n}^{(\lambda_n)}(u_n)
\xrightarrow{n\in N'} 0$. This together with $u_n
\xrightharpoonup{n\in N'} u_*$ imply that
\begin{equation}
T_{\Theta_n}^{(\lambda_n)}(u_n)
\xrightharpoonup{n\in N'} u_*.\label{1st.eq.needed.for.demiclosed}
\end{equation}

Recall, now, Theorem~\ref{thm:the.analysis}.\ref{thm:T_n.demiclosed}
in order to obtain $(I-T)(T_{\Theta_n}^{(\lambda_n)}(u_n))
\xrightarrow{n\in N'} 0$. This result,
\eqref{1st.eq.needed.for.demiclosed}, and
Definition~\ref{def:demiclosed} lead us to $(I-T)(u_*)=0
\Leftrightarrow u_*\in\Fix(T)$. This establishes
Theorem~\ref{thm:the.analysis}.\ref{thm:weak.cluster.points.in.fixT}.

\item This is a direct consequence of
  Theorem~\ref{thm:the.analysis}.\ref{thm:weak.cluster.points.in.fixT}
  and the well-known fact that $\mathfrak{S}((u_n)_{n\in \Natural})
  \subset \mathfrak{W}((u_n)_{n\in \Natural})$.

\item It is easy to verify by
  Assumptions~\ref{main.assumptions}.\ref{ass:lambda.epsilon} and
  \ref{main.assumptions}.\ref{ass:eta} that 
\begin{equation*}
\frac{(2-\lambda_n)\eta_n}{2-\lambda_n(1-\eta_n)} =
\frac{(2-\lambda_n)\eta_n}{(2-\lambda_n) +\lambda_n\eta_n} \geq
\frac{\epsilon \check{\eta}}{2(1+\hat{\eta})} >0.
\end{equation*}
Using also \eqref{strongly.attracting.Omega_n}, we easily verify under
Assumption~\ref{main.assumptions}.\ref{ass:nonempty.intersection}
that
\begin{equation}
\forall n\in N, \forall v\in\Omega, \quad \frac{\epsilon
  \check{\eta}}{2(1+\hat{\eta})} \norm{u_n - 
  u_{n+1}}^2 \leq \norm{u_n - v}^2  - 
\norm{u_{n+1} - v}^2.\label{special.Fejer}
\end{equation}
The claim of
Theorem~\ref{thm:the.analysis}.\ref{thm:strong.convergence} is a
direct consequence of \eqref{special.Fejer},
Assumption~\ref{main.assumptions}.\ref{ass:ri.Omega.hyperplane}, and
Fact~\ref{fact:special.Fejer}.\qedhere

\end{enumerate}
\end{proof}

\section{Special Cases of the General Algorithm}\label{sec:examples}

\subsection{Exploring $(T_n)_{n\in\Natural}$.}\label{sec:apriori}

The available a-priori information about the model \eqref{model}
enters Algorithm~\ref{the.algo} through the sequence of mappings
$(T_n)_{n\in\Natural}$, i.e., implicitly via the sequence of sets
$(\Fix(T_n))_{n\in \Natural}$. Given that $n\in \Natural$ stands for
time, the sequence $(T_n)_{n\in\Natural}$ aims to capture the dynamic
nature of a-priori information, which is usually met in signal
processing and machine learning applications. For example, it is often
the case in adaptive signal processing to face a channel whose impulse
response changes slowly with time. Notice also here that the sequence
$(T_n)_{n\in\Natural}$ belongs to the rich family of strongly
attracting quasi-nonexpansive mappings. To demonstrate the versatility
offered by this class of mappings in the usage of the available
a-priori knowledge, examples of such mappings, mobilized extensively in
various contexts of optimization theory
\cite{bauschke.combettes.book}, are demonstrated in this section. More
specifically, in order to apply the proposed scheme to a real-world
problem, the following Example~\ref{def:l1.loss} considers a
non-smooth loss function which infuses sparsity information in
\eqref{model}. Such a loss function will be incorporated in
Algorithm~\ref{ex:subgrad.Thetan} to devise an algorithmic solution to
the online sparse system/signal recovery task of
Section~\ref{sec:application}.

\begin{example}[Resolvent]\label{ex:resolvent}
For a set-valued mapping $A:\hilbert \rightarrow 2^{\hilbert}$, its
\textit{graph} is defined as the set $\graph(A):=
\{(x,y)\in\hilbert\times\hilbert: y\in A(x)\}$. The mapping $A$ will
be called \textit{monotone} if $\forall (x_1,y_1), (x_2,y_2)\in
\graph(A)$, $\innprod{x_1-x_2}{y_1-y_2}\geq 0$
\cite{bauschke.combettes.book, Minty, BCR.resolvents,
  Rockafellar.Wets}. A monotone mapping $A$ will be called
\text{maximal} if no enlargement of its graph is possible without
destroying monotonicity, i.e., $\forall (x,y)\in
\hilbert\times\hilbert \setminus\graph(A)$, there exists a pair
$(x_0,y_0)\in\graph(A)$ such that $\innprod{x-x_0}{y-y_0}<0$
\cite{bauschke.combettes.book, Minty, BCR.resolvents,
  Rockafellar.Wets}. For example, the linear mapping induced by
any positive semi-definite matrix is maximal monotone \cite[Examples
  12.2 and 12.7]{Rockafellar.Wets}.

Now, given a maximal monotone mapping $A:\hilbert \rightarrow
2^{\hilbert}$, and a $\xi>0$, its \textit{resolvent}
$T^{(\xi)}:=(I+\xi A)^{-1}: \hilbert \rightarrow\hilbert$ is an
$1$-attracting nonexpansive mapping, where $(\cdot)^{-1}$ stands for
the inverse of a mapping. The fixed point set of $T^{(\xi)}$ becomes
$\Fix(T^{(\xi)})= \{x\in\hilbert: 0\in A(x)\}$. For example, in the
case of a positive semi-definite matrix, this fixed point set is
nothing but the null space of the matrix.
\end{example}

\begin{example}[Proximity mapping]\label{ex:proximal.mapping}
Given a lower semi-continuous function $\Phi: \hilbert \rightarrow
\Real$, the \textit{Moreau envelope of index $\gamma>0$} of $\Phi$ is the
function
\begin{equation}
\Phi^{(\gamma)}: \hilbert\rightarrow\Real: x\mapsto
\inf_{y\in\hilbert} \left(\Phi(y) + \frac{1}{2\gamma}\norm{x-y}^2
\right).\label{Moreau.envelope}
\end{equation}
Then, the \textit{proximity mapping} $T_{\gamma \Phi}$ is defined as
the mapping which maps to an $x\in\hilbert$ the \textit{unique}
minimizer of \eqref{Moreau.envelope}
\cite{Combettes.Pesquet.proximal.2010, Moreau.proximity.map,
  Combettes.Wajs}. It can be verified that the proximity mapping
$T_{\gamma \Phi}$ is $1$-attracting nonexpansive with fixed point set
$\Fix(T_{\gamma \Phi})= \{x\in\hilbert:
\Phi(x)=\inf_{y\in\hilbert}\Phi(y)\}$ \cite{Combettes.Wajs,
  Combettes.Pesquet.proximal.2010}.
\end{example}

\begin{example}[Inconsistent a-priori
    information]\label{ex:inconsistent} Assume that the available
  a-priori knowledge about our system is a gathering of several pieces
  of information which take the form of the following nonempty closed
  convex sets: $\Gamma$, $\{C_m\}_{m=1}^M$ in $\hilbert$, with $M\in
  \Naturalstar$. With $\Gamma$ we denote the information that our
  system should surely satisfy, called the \textit{absolute} or
  \textit{hard constraint}. Ideally, our solution set is $\Gamma \cap
  (\bigcap_{m=1}^M C_m)$. However, it is quite often the case that the
  available pieces of a-priori knowledge are inconsistent, i.e., the
  previous intersection is the empty set, e.g., \cite{sy.wideband}. To
  tackle such a problem, we define the following \textit{proximity
    function:} $\forall x\in\hilbert$, $p(x):= \sum_{m=1}^M \beta_m
  d^2(x,C_m)$, where $\{\beta_m\}_{m=1}^M$ are convex weights, i.e.,
  $\{\beta_m\}_{m=1}^M \subset (0,1]$, such that
$\sum_{m=1}^M\beta_m=1$. The proximity function is everywhere
Fr\'{e}chet differentiable, and its differential is the mapping $p':=
2\sum_{m=1}^M \beta_m(I-P_{C_m}): \hilbert\rightarrow
\hilbert$. Define, now, as our new solution set $\Xi:= \argmin\{p(x):
x\in\Gamma \}$. The non-emptiness of $\Xi$ is guaranteed if at least
one of $\{C_m\}_{m=1}^M$ or $\Gamma$ is bounded
\cite{Yamada.HSDM.2001}. In words, $\Xi$ is the set of all those
points in $\Gamma$ that least violate, in the sense of the previous
proximity function, the rest of the constraints
$\{C_m\}_{m=1}^M$. Under the previous setting, and $\forall
\lambda\in(0,2)$, the mapping $T_p:= P_{\Gamma}(I-\lambda p')$, is
$(1-\frac{\lambda}{2})$-attracting nonexpansive with fixed point set
$\Fix(T_p)=\Xi$ \cite{yukawa.splitting, CensorSplitFeasibility,
  CombettesHardConstr, YamadaetalQuadrOptim, Yamada.HSDM.2001}.
\end{example}

\begin{example}[The class $\mathfrak{T}$ of mappings
    \cite{BauschkeCombettesWeak2Strong}]\label{ex:cutter} For any
  $x,y\in\hilbert$, define the following set: $H(x,y):=
  \{v\in\hilbert: \innprod{x-y}{v-y}\leq 0\}$. In words, the set
  $H(x,y)$ is the closed halfspace onto which $y$ is the metric
  projection of $x$. Now, a mapping $T:\hilbert \rightarrow\hilbert$
  is said to belong to the class $\mathfrak{T}$ of mappings, if
  $\forall x\in\hilbert$, $\Fix(T) \subset H(x,T(x))$
  \cite{BauschkeCombettesWeak2Strong}. An equivalent description of
  the class $\mathfrak{T}$ is as follows: $T\in \mathfrak{T}$ iff $T$
  is firmly quasi-nonexpansive \cite[Proposition
    2.3]{BauschkeCombettesWeak2Strong}. Moreover, $\forall T\in
  \mathfrak{T}$, $\Fix(T)= \bigcap_{x\in\hilbert} H(x,T(x))$. For
  example, the subgradient projection mapping $T_{\Theta}$
  (Example~\ref{def:subgrad.projection}) belongs to this class
  \cite[Proposition 2.3]{BauschkeCombettesWeak2Strong}.
\end{example}

\begin{definition}[Sparsity-aware loss function]\label{def:l1.loss}
Henceforth, the notation $\overline{j_1,j_2}$, for any integers
$j_1\leq j_2$, will stand for $\{j_1, j_1+1, \ldots, j_2\}$. Assume
that $\hilbert:= \Real^L$, for some $L\in \Naturalstar$. We introduce,
here, the following sequence of convex, continuous, non-negative
functions $(\Phi_n: \hilbert \rightarrow
[0,\infty))_{n\in\Natural}$. Given a sequence of weight vectors
  $(\bm{w}_{n})_{n\in\Natural} \subset \Real^L$, with positive
  components, i.e., $w_{n,j}>0$, $\forall j\in\overline{1,L}$,
  $\forall n\in\Natural$, and a positive parameter $\rho > 0$, we
  define
\begin{equation}
\forall n\in\Natural, \forall \bm{x}\in\Real^L, \quad \Phi_n(\bm{x}) :=
\max\{0, \sum_{j=1}^L w_{n,j} |x_j|-\rho\}. \label{l1.function}
\end{equation}
It is clear that the $0$-th level set for each $\Phi_n$ is a
weighted $\ell_1$-ball, i.e.,
\begin{equation*}
\forall n\in\Natural, \quad \lev\Phi_n = B_{\ell_1}[\bm{w}_n,\rho] :=
\{\bm{x}\in\Real^L:\ \sum_{j=1}^L w_{n,j} |x_j| \leq \rho\}.
\end{equation*}
The fixed point set of the relaxed subgradient
projection mapping $T_{\Phi_n}^{(\nu_n)}$, $\nu_n\in (0,2)$, is the
weighted $\ell_1$-ball, i.e., $\Fix(T_{\Phi_n}^{(\nu_n)}) =
B_{\ell_1}[\bm{w}_n,\rho]$. The sequence $B_{\ell_1}[\bm{w}_n,\rho]$
has been very useful in building sparsity-aware online learning
methods in \cite{kst@tsp11, skt@icassp11, skt.eusipco11}. There, the metric
projection mapping $P_{B_{\ell_1}[\bm{w}_n,\rho]}$ was employed, whose computation
scales to the order of $\mathcal{O}(L\log_2L)$. 

Following a different path than \cite{kst@tsp11, skt@icassp11,
  skt.eusipco11}, the information carried by
$(B_{\ell_1}[\bm{w}_n,\rho])_{n\in\Natural}$ is viewed from an
alternative angle in this study: $\forall n\in \Natural$,
$B_{\ell_1}[\bm{w}_n,\rho]$ is not just a closed convex set, onto
which we project, but it is also the set of minimizers of the non-smooth
loss function $\Phi_n$. In order to minimize the non-smooth $\Phi_n$,
the subgradient information will be used. However, the employment of
such an information is not possible via \cite{YamadaAPSM,
  YamadaOguraAPSMNFAO, Kostas.APSM.NFAO}, since the subgradient
projection mapping (Definition~\ref{def:subgrad.projection}) belongs
to the class of strongly attracting quasi-nonexpansive mappings, which
is strictly larger than the class of strongly attracting nonexpansive
operators, utilized in \cite{Kostas.APSM.NFAO}.
\end{definition}

The set $B_{\ell_1}[\bm{w}_n,\rho]$ is a closed convex set, and its
metric projection mapping is given as follows. To save space, we give
here a short description. For the full discussion, the interested
reader can refer to \cite{kst@tsp11}.

\begin{fact}[Metric projection mapping onto the weighted $\ell_1$-ball
    \cite{kst@tsp11}]\label{fact:project.li.ball} Given
  $\bm{x}\in\Real^L\setminus B_{\ell_1}[\bm{w}_n,\rho]$, there exists
  an $l_* \in\overline{1,L}$, and a set of integers
  $\{l_j\}_{j\in\overline{l_*+1,L}} \subset \overline{l_*+1,L}$, such
  that the metric projection $P_{B_{\ell_1}[\bm{w}_n,\rho]}(\bm{x})$
  is given by a permutation on the components of the following vector
\begin{equation}
\left[x_1 - \frac{\sum_{i=1}^{l_*} w_{n,i}|x_i| -\rho}{\sum_{i=1}^{l_*}
  w_{n,i}^2} \sign(x_1) w_{n,1}, \ldots, x_{l_*} -
  \frac{\sum_{i=1}^{l_*} w_{n,i}|x_i| -\rho}{\sum_{i=1}^{l_*}
    w_{n,i}^2} \sign(x_{l_*}) w_{n,l_*}, 0,\ldots,
  0\right]^t, \label{project.onto.l1.ball}
\end{equation}
where
\begin{equation*}
\begin{cases}
|x_j| > \frac{\sum_{i=1}^{l_*} w_{n,i}|x_i| -\rho}{\sum_{i=1}^{l_*}
  w_{n,i}^2} w_{n,j}, & \forall j\in\overline{1,l_*},\\
|x_j| \leq \frac{\sum_{i=1}^{l_j} w_{n,i}|x_i| -\rho}{\sum_{i=1}^{l_j}
  w_{n,i}^2} w_{n,j}, & \forall j\in\overline{l_*+1,L}.
\end{cases}
\end{equation*}
Without any loss of generality, we assume that
$P_{B_{\ell_1}[\bm{w}_n,\rho]}(\bm{x})$ is given by
\eqref{project.onto.l1.ball} in the sequel.
\end{fact}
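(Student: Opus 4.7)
The plan is to derive the formula from the KKT optimality conditions of the underlying convex program $\min_{\bm{y}\in\Real^L} \frac{1}{2}\norm{\bm{x} - \bm{y}}^2$ subject to $\sum_{j=1}^L w_{n,j}|y_j| \leq \rho$. Since $\bm{x}\notin B_{\ell_1}[\bm{w}_n,\rho]$, the constraint is active at the unique minimizer $\bm{y}^* := P_{B_{\ell_1}[\bm{w}_n,\rho]}(\bm{x})$; uniqueness follows from strong convexity of the objective. By standard convex duality, there exists a Lagrange multiplier $\mu>0$ satisfying $\sum_{j=1}^L w_{n,j}|y^*_j| = \rho$, and the stationarity condition reads $0\in y^*_j - x_j + \mu w_{n,j}\,\partial|y^*_j|$ for every $j\in\overline{1,L}$.

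First I would split the stationarity condition according to the sign of $y^*_j$ to extract the soft-thresholding form $y^*_j = \sign(x_j)\max\{|x_j| - \mu w_{n,j},\,0\}$. Second, letting $S:=\{j: |x_j|>\mu w_{n,j}\}$ denote the support of $\bm{y}^*$, I would substitute the soft-thresholded expression into the active-constraint equation to obtain $\mu = \frac{\sum_{j\in S} w_{n,j}|x_j|-\rho}{\sum_{j\in S} w_{n,j}^2}$. Third, applying a permutation of the coordinates that places the members of $S$ into the first $l_*:=|S|$ positions reproduces \eqref{project.onto.l1.ball} coordinatewise: the first $l_*$ entries become $x_j - \mu\,\sign(x_j)\,w_{n,j}$, and the remaining $L-l_*$ entries vanish.

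Finally, the two inequalities listed in the statement are precisely the consistency checks that identify $S$ correctly. The strict inequality $|x_j|>\mu w_{n,j}$ on $\overline{1,l_*}$ is the defining condition for $j\in S$, while the non-strict inequality $|x_j|\leq \frac{\sum_{i=1}^{l_j} w_{n,i}|x_i|-\rho}{\sum_{i=1}^{l_j}w_{n,i}^2}\,w_{n,j}$ on $\overline{l_*+1,L}$ certifies that each remaining index stays inactive relative to the partial summation up to some cutoff $l_j$. The main technical obstacle is the combinatorial step of actually locating $S$: the map sending a candidate active set to its associated multiplier is monotone under the sort induced by $|x_j|/w_{n,j}$, so sorting the coordinates in decreasing order and scanning reveals $l_*$ and the auxiliary cutoff indices $\{l_j\}_{j\in\overline{l_*+1,L}}$ simultaneously in $\mathcal{O}(L\log_2 L)$ operations. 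This algorithmic part is carried out in detail in \cite{kst@tsp11}, which we invoke to finish the argument.
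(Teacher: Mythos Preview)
The paper does not actually prove this statement: it is recorded as a \emph{Fact}, with the explicit remark ``To save space, we give here a short description. For the full discussion, the interested reader can refer to \cite{kst@tsp11}.'' So there is no in-paper argument to compare your proposal against.

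Your KKT derivation is the standard route and is essentially correct as an outline. The stationarity condition indeed yields the weighted soft-threshold $y_j^* = \sign(x_j)\max\{|x_j|-\mu w_{n,j},0\}$, and substituting into the active constraint gives $\mu = \bigl(\sum_{j\in S} w_{n,j}|x_j|-\rho\bigr)\big/\sum_{j\in S} w_{n,j}^2$, which after the permutation reproduces \eqref{project.onto.l1.ball}. The one place to be careful is the second family of inequalities: for $j\in\overline{l_*+1,L}$ the bound is stated with the index $l_j$, not with $l_*$, so it is \emph{not} simply the inactivity condition $|x_j|\leq \mu w_{n,j}$ with the final multiplier. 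Those inequalities record the successive thresholds encountered during the sort-and-scan procedure (each discarded coordinate fails the active test at the moment it is examined, with the partial sums up to $l_j$), and their precise form only becomes transparent once the monotone scanning argument is spelled out. You correctly flag this as the technical core and defer to \cite{kst@tsp11}, which is exactly what the paper does.
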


Regarding Definition~\ref{def:l1.loss}, consider the following assumptions.

\begin{assumption}\label{ass:l1.loss}\mbox{}
\begin{enumerate}[1.]

\item \label{ass:weight.vectors}
The sequence of weight vectors $(\bm{w}_n)_{n\in\Natural}$
is constructed such that $\forall n\in\Natural$, $\forall
j\in\overline{1,L}$, $w_{n,j}\in [\check{\epsilon}, \hat{\epsilon}]$,
for some $\check{\epsilon}, \hat{\epsilon}>0$.

\item\label{ass:nu} Given the sequence of relaxed subgradient
  projection mappings $(T_{\Phi_n}^{(\nu_n)})_{n\in \Natural}$, with
  respect to the sequence $(\Phi_n)_{n\in \Natural}$ in
  Definition~\ref{def:l1.loss}, there exists $\epsilon'\in (0,1]$ such
that $\forall n\in\Natural$, $\nu_n\in[\epsilon', 2-\epsilon']$.

\end{enumerate}
\end{assumption}

\begin{table}[t]
\center
\begin{tabular}{|c|c|c|}
\hline $\bm{x}$ & & $\partial\Phi_n(\bm{x})$ \\ \hline\hline
$\sum_{j=1}^n w_{n,j} |x_j| <\rho$, & & $\{\bm{0}\}$.\\ \hline
$\sum_{j=1}^n w_{n,j} |x_j| >\rho$, & $\mathfrak{J}_{\bm{x}}
=\emptyset$, &
\begin{minipage}{10cm}\center
\vspace{5pt}
$\left\{\left[\begin{smallmatrix}
w_{n,1}\sign(x_1)\\
\vdots\\ w_{n,L}\sign(x_L)
\end{smallmatrix}\right]\right\}$.
\vspace{5pt}\end{minipage}\\ \hline
$\sum_{j=1}^n w_{n,j} |x_j| >\rho$, & $\mathfrak{J}_{\bm{x}}
 \neq \emptyset$, & 
\begin{minipage}{10cm}\center
\vspace{5pt}
$\conv \left\{\bm{u}_1,\ldots,
 \bm{u}_{2^{\tau}}\right\}$, \\ 
where the vectors $\bm{u}_k$,
 $\forall k\in\overline{1,2^{\tau}}$, are given by\\
$u_{k,j}:=\begin{cases}w_{n,j}\sign(x_j), & \text{if}\ j\notin
\mathfrak{J}_{\bm{x}},\\ \pm w_{n,j}, &
\text{if}\ j\in \mathfrak{J}_{\bm{x}}.\end{cases}$\vspace{5pt}
\end{minipage}
\\ \hline
$\sum_{j=1}^n w_{n,j} |x_j| =\rho$, & $\mathfrak{J}_{\bm{x}} =
 \emptyset$,
 & \begin{minipage}{10cm}\center
\vspace{5pt}
$\conv\left\{\bm{0}, \left[\begin{smallmatrix}
w_{n,1}\sign(x_1)\\
\vdots\\
w_{n,L}\sign(x_L)\end{smallmatrix}\right]\right\}$.\vspace{5pt}
\end{minipage}\\ \hline
$\sum_{j=1}^n w_{n,j} |x_j| =\rho$, & $\mathfrak{J}_{\bm{x}}\neq
 \emptyset$, & $\conv \{\bm{0},\bm{u}_1,\ldots,
 \bm{u}_{2^{\tau}}\}$.\\ \hline
\end{tabular}\vspace*{10pt}
\caption{Here,
  $\mathfrak{J}_{\bm{x}}:=\{j \in\overline{1,L}:\ x_j=0\}$, and $\tau$
  stands for the cardinality of $\mathfrak{J}_{\bm{x}}$, whenever
  $\mathfrak{J}_{\bm{x}}\neq \emptyset$. The $\conv$ symbol stands for
  the convex hull of a set.}\label{table:subgradient.linear.l1norm}
\end{table}

\begin{lemma}\label{lem:l1.function}
The following properties hold true.
\begin{enumerate}[1.]

\item\label{lem:l1.subdiff} The subdifferentials of the loss functions
  $(\Phi_n)_{n\in\Natural}$, defined in \eqref{l1.function}, are given
  in Table~\ref{table:subgradient.linear.l1norm}.

\item\label{lem:l1.bounded.subdiff} Let
  Assumption~\ref{ass:l1.loss}.\ref{ass:weight.vectors} hold
  true. Then, $\forall \bm{x}\in \Real^L$, $(\Phi_n'(\bm{x}))_{n\in
    \Natural}$ is bounded.

\item\label{lem:l1.interior} Let
  Assumption~\ref{ass:l1.loss}.\ref{ass:weight.vectors} hold
  true. Then, $\interior(\bigcap_{n\in\Natural}
  B_{\ell_1}[\bm{w}_n,\rho])\neq \emptyset$.

\item\label{lem:l1.liminf.ass} Let
  Assumptions~\ref{ass:l1.loss}.\ref{ass:weight.vectors} and
  \ref{ass:l1.loss}.\ref{ass:nu} hold true. Then, the sequence of
  relaxed subgradient projection mappings
  $(T_{\Phi_n}^{(\nu_n)})_{n\in\Natural}$ satisfies
  Assumption~\ref{ass:liminf.Fn}.

\end{enumerate}
\end{lemma}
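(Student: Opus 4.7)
The plan is to dispatch items \ref{lem:l1.subdiff}--\ref{lem:l1.interior} by direct computation, then concentrate the argument on item \ref{lem:l1.liminf.ass}. For item \ref{lem:l1.subdiff}, I will write $\Phi_n = \max\{f_0,f_1\}$ with $f_0\equiv 0$ and $f_1(\bm{x}):= h_n(\bm{x})-\rho$, where $h_n(\bm{x}):= \sum_{j=1}^L w_{n,j}|x_j|$, apply the standard max-rule $\partial\max\{f_0,f_1\}(\bm{x}) = \conv\bigl(\bigcup_{i\in I(\bm{x})}\partial f_i(\bm{x})\bigr)$ for the active-index set $I(\bm{x})\subset\{0,1\}$, and use the separability of $h_n$ to identify $\partial h_n(\bm{x})$ with the product over $j$ of $\{w_{n,j}\sign(x_j)\}$ (when $x_j\neq 0$) and $[-w_{n,j},w_{n,j}]$ (when $x_j=0$). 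Splitting on whether $h_n(\bm{x})$ is less than, greater than, or equal to $\rho$, and on whether $\mathfrak{J}_{\bm{x}}$ is empty, reproduces the five rows of Table~\ref{table:subgradient.linear.l1norm}. Item \ref{lem:l1.bounded.subdiff} is then immediate: every subgradient has coordinates bounded by $\hat{\epsilon}$, hence Euclidean norm at most $\sqrt{L}\,\hat{\epsilon}$. For item \ref{lem:l1.interior}, the bound $w_{n,j}\leq \hat{\epsilon}$ yields $B_{\ell_1}[\bm{w}_n,\rho]\supset \{\bm{x}: \|\bm{x}\|_1\leq \rho/\hat{\epsilon}\}$ for every $n$, and this last set contains the open Euclidean ball $B(\bm{0},\rho/(\sqrt{L}\hat{\epsilon}))$, placing $\bm{0}$ in the interior of the intersection.

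For item \ref{lem:l1.liminf.ass}, fix $N\in\Natural_{\infty}^{\#}$, $(\bm{x}_n)_{n\in N}\subset\Real^L$, and $\gamma>0$ with $d(\bm{x}_n, B_{\ell_1}[\bm{w}_n,\rho])\geq \gamma$ for every $n\in N$. Then $\bm{x}_n$ lies outside the ball, so $\Phi_n(\bm{x}_n) = h_n(\bm{x}_n)-\rho>0$ and
\begin{equation*}
\|(I-T_{\Phi_n}^{(\nu_n)})(\bm{x}_n)\| = \nu_n\,\frac{\Phi_n(\bm{x}_n)}{\|\Phi_n'(\bm{x}_n)\|} \geq \epsilon'\,\frac{\Phi_n(\bm{x}_n)}{\sqrt{L}\,\hat{\epsilon}}
\end{equation*}
by Assumption~\ref{ass:l1.loss}.\ref{ass:nu} and item \ref{lem:l1.bounded.subdiff}. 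It remains to show $\Phi_n(\bm{x}_n)\geq \check{\epsilon}\gamma$. Let $\bm{y}_n:= P_{B_{\ell_1}[\bm{w}_n,\rho]}(\bm{x}_n)$. Since $\bm{x}_n$ is outside the ball, $\bm{y}_n$ must satisfy $h_n(\bm{y}_n)=\rho>0$, so $\bm{y}_n\neq \bm{0}$ and some coordinate $y_{n,j_0}$ is nonzero. Slater's condition holds by item \ref{lem:l1.interior}, so $N_{B_{\ell_1}[\bm{w}_n,\rho]}(\bm{y}_n)$ is the positive cone generated by $\partial h_n(\bm{y}_n)$; hence $\bm{x}_n-\bm{y}_n = t_n\bm{v}_n$ for some $t_n>0$ and $\bm{v}_n\in\partial h_n(\bm{y}_n)$. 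The separable form of $\partial h_n$ forces $|(\bm{v}_n)_{j_0}|= w_{n,j_0}\geq \check{\epsilon}$, whence $\|\bm{v}_n\|\geq \check{\epsilon}$. The subgradient inequality for $h_n$ at $\bm{y}_n$ then yields
\begin{equation*}
\Phi_n(\bm{x}_n) = h_n(\bm{x}_n)-h_n(\bm{y}_n) \geq \innprod{\bm{v}_n}{\bm{x}_n-\bm{y}_n} = t_n\|\bm{v}_n\|^2 = \|\bm{x}_n-\bm{y}_n\|\,\|\bm{v}_n\| \geq \check{\epsilon}\,\gamma,
\end{equation*}
so $\|(I-T_{\Phi_n}^{(\nu_n)})(\bm{x}_n)\| \geq \epsilon'\check{\epsilon}\gamma/(\sqrt{L}\hat{\epsilon}) =: \delta>0$, which is precisely Assumption~\ref{ass:liminf.Fn}.

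The hard part is the lower bound $\Phi_n(\bm{x}_n)\geq \check{\epsilon}\gamma$: the direct subgradient inequality at $\bm{x}_n$ only delivers $\Phi_n(\bm{x}_n)\leq \|\Phi_n'(\bm{x}_n)\|\,d(\bm{x}_n, B_{\ell_1}[\bm{w}_n,\rho])$, the wrong direction. The remedy is to migrate to the projection $\bm{y}_n$ and to exploit that the level surface $\{h_n=\rho\}$ necessarily avoids the origin; this forces any generator of the normal cone to have at least one coordinate of magnitude $\geq \check{\epsilon}$, giving a lower bound on $\|\bm{v}_n\|$ that is uniform in $n$ thanks to Assumption~\ref{ass:l1.loss}.\ref{ass:weight.vectors}.
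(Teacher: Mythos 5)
Your proposal is correct, and for the decisive item~\ref{lem:l1.liminf.ass} it takes a genuinely different route from the paper. Items~\ref{lem:l1.subdiff}--\ref{lem:l1.interior} coincide in substance with the paper's treatment (the paper omits the subdifferential computation, citing standard convex calculus, and proves the interior claim with the ball $B(\bm{0},\rho/(L\hat{\epsilon}))$ rather than your $B(\bm{0},\rho/(\sqrt{L}\hat{\epsilon}))$ -- both work). For item~\ref{lem:l1.liminf.ass}, the paper leans on the explicit formula for $P_{B_{\ell_1}[\bm{w}_n,\rho]}$ in Fact~\ref{fact:project.li.ball}: it expands $d^2(\bm{x}_n,B_{\ell_1}[\bm{w}_n,\rho])$ coordinatewise, uses the characterization inequalities to absorb the truncated coordinates, and arrives at $\Phi_n(\bm{x}_n)\geq \gamma\,\check{\epsilon}^2/(\sqrt{L}\hat{\epsilon})$, after which the argument concludes exactly as yours does via $\nu_n\geq\epsilon'$ and the uniform subgradient bound. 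You instead bypass the projection formula entirely: you place the projection $\bm{y}_n$ on the level surface $\{h_n=\rho\}$, which cannot contain the origin, invoke the normal-cone description of the constraint set under Slater's condition to write $\bm{x}_n-\bm{y}_n$ as a positive multiple of a subgradient $\bm{v}_n\in\partial h_n(\bm{y}_n)$ with $\norm{\bm{v}_n}\geq\check{\epsilon}$, and then the subgradient inequality at $\bm{y}_n$ yields the cleaner bound $\Phi_n(\bm{x}_n)\geq\check{\epsilon}\gamma$ (slightly sharper than the paper's constant). Your observation that the subgradient inequality at $\bm{x}_n$ gives only the useless upper bound is exactly the point where a naive attempt fails, and your fix is sound. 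What each approach buys: the paper's computation is self-contained given that Fact~\ref{fact:project.li.ball} is already available in the manuscript; your argument is independent of that formula and would extend verbatim to other convex constraints of the form $\{g\leq 0\}$ whose boundary stays away from the minimizers of $g$ and whose subgradients on the boundary are uniformly bounded away from zero.
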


\begin{proof}
\begin{enumerate}[1.]

\item To save space, the calculation of the subdifferentials in
  Table~\ref{table:subgradient.linear.l1norm} is omitted. These
  results can be reproduced by using standard arguments of convex
  analysis, e.g., \cite[Thm.\ 25.6]{Rockafellar}.

\item Lemma~\ref{lem:l1.function}.\ref{lem:l1.bounded.subdiff} can be
  easily established by
  Assumption~\ref{ass:l1.loss}.\ref{ass:weight.vectors} and
  Table~\ref{table:subgradient.linear.l1norm}.

\item Choose any $\bm{x}\in
  B(\bm{0},\frac{\rho}{L\hat{\epsilon}})$. Then, $\forall
  j\in\overline{1,L}$, $|x_j| \leq
  \frac{\rho}{L\hat{\epsilon}}$. Moreover, $\sum_{j=1}^L w_{n,j}
  |x_j|\leq \sum_{j=1}^L \hat{\epsilon} \frac{\rho}{L\hat{\epsilon}} =
  \rho$. Hence, $B(\bm{0},\frac{\rho}{L\hat{\epsilon}}) \subset
  B_{\ell_1}[\bm{w}_n,\rho]$, $\forall n\in\Natural$. This clearly
  suggests that $\bm{0}\in \interior(\bigcap_{n\in\Natural}
  B_{\ell_1}[\bm{w}_n,\rho])$, which establishes
  Lemma~\ref{lem:l1.function}.\ref{lem:l1.interior}.

\item First, notice that $\forall n\in\Natural$,
  $\Fix(T_{\Phi_n}^{(\nu_n)}) = B_{\ell_1}[\bm{w}_n,\rho]$. Now,
according to Assumption~\ref{ass:liminf.Fn}, fix arbitrarily a
subsequence $N\in \Natural_{\infty}^{\#}$, a sequence
$(\bm{x}_n)_{n\in N}\subset\Real^L$, and a $\gamma>0$ such that
$\forall n\in N$, $d(\bm{x}_n, B_{\ell_1}[\bm{w}_n,\rho]))\geq
\gamma$. Notice by Fact~\ref{fact:project.li.ball} the following:
$\forall n\in N$,
\begin{align*}
\gamma^2 & \leq d^2(\bm{x}_n, B_{\ell_1}[\bm{w}_n,\rho]) = \norm{\bm{x}_n -
  P_{B_{\ell_1}[\bm{w}_n,\rho]}(\bm{x}_n)}^2 \\
& = \sum_{j=1}^{l_*} \frac{\left(\sum_{i=1}^{l_*} w_{n,i}|x_{n,i}| -
  \rho \right)^2}{\left(\sum_{i=1}^{l_*} w_{n,i}^2\right)^2} w_{n,j}^2
+ \sum_{j=l_*+1}^L x_{n,j}^2\\
& \leq \sum_{j=1}^{l_*} \frac{\left(\sum_{i=1}^{l_*} w_{n,i}|x_{n,i}| -
  \rho \right)^2}{\left(\sum_{i=1}^{l_*} w_{n,i}^2\right)^2} w_{n,j}^2
+ \sum_{j=l_*+1}^{L} \frac{\left(\sum_{i=1}^{l_j} w_{n,i}|x_{n,i}| -
  \rho \right)^2}{\left(\sum_{i=1}^{l_j} w_{n,i}^2\right)^2}
w_{n,j}^2\\
& \leq \frac{\left(\sum_{i=1}^{L} w_{n,i}|x_{n,i}| -
  \rho \right)^2}{\left(\sum_{i=1}^{l_*} w_{n,i}^2\right)^2}
\sum_{j=1}^L w_{n,j}^2,
\end{align*}
which, in turn, results into
\begin{equation*}
\Phi_{n}^2(\bm{x}_n)  = \left(\sum_{i=1}^{L} w_{n,i}|x_{n,i}| -
  \rho \right)^2 \geq \gamma^2 \frac{\left(\sum_{i=1}^{l_*}
    w_{n,i}^2\right)^2}{\sum_{j=1}^L w_{n,j}^2} \geq \gamma^2
  \frac{\check{\epsilon}^4}{L\hat{\epsilon}^2} =: \delta'^2>0,
  \quad \forall n\in N. 
\end{equation*}
Notice, also, by Example~\ref{def:subgrad.projection} and
Lemma~\ref{lem:l1.function}.\ref{lem:l1.bounded.subdiff} that $\forall
n\in N$,
\begin{align*}
\norm{(I-T_{\Phi_{n}}^{(\nu_n)})(\bm{x}_n)} = \nu_n
\frac{\Phi_n(\bm{x}_n)}{\norm{\Phi_n'(\bm{x}_n)}} \geq \check{\epsilon}
\frac{\delta'}{D} >0,
\end{align*}
which clearly suggests that $\exists \delta>0$ such that
$\liminf_{n\in N} \norm{(I-T_{\Phi_n}^{(\nu_n)})(\bm{x}_n)} 
\geq \delta$. This establishes
Lemma~\ref{lem:l1.function}.\ref{lem:l1.liminf.ass}.\qedhere

\end{enumerate}
\end{proof}

\subsection{Exploring $(\Theta_n)_{n\in\Natural}$.}\label{sec:loss.functions}

In this section, the metric distance function to closed convex sets
will be used in order to define a sequence of loss functions
$(\Theta_n)_{n\in\Natural}$. Such sequences have already found
numerous applications in online signal processing and machine learning
tasks \cite{KostasClassifyTSP, ksi.Beam.TSP, tsy.sp.magazine,
  yukawa.splitting, YKostasY}, under the light, however, of the
predecessors \cite{YamadaAPSM, YamadaOguraAPSMNFAO, Kostas.APSM.NFAO}
of the present framework. In this section, this specific sequence
$(\Theta_n)_{n\in\Natural}$ will be blended with the more general
class of strongly attracting quasi-nonexpansive mappings in order to
construct Algorithm~\ref{ex:subgrad.Thetan}. Given the wide
applicability of the techniques in \cite{YamadaAPSM,
  YamadaOguraAPSMNFAO, Kostas.APSM.NFAO}, it is natural to anticipate
an even larger span of usage for
Algorithm~\ref{ex:subgrad.Thetan}. Such a potential will be
demonstrated in Section~\ref{sec:application}, where
Algorithm~\ref{ex:subgrad.Thetan} is applied to the online
sparse system/signal recovery task.

\begin{definition}\label{ex:extrapolated.distance.functions}
Assume a sequence of nonempty closed convex sets
$(S_n)_{n\in\Natural}$. Given a user-defined $q\in\Naturalstar$,
let the following index set
\begin{equation*}
\mathcal{J}_n := \overline{\max\{0,n-q+1\},n}, \quad\forall n\in\Natural.
\end{equation*}
Notice that the sequence $(\mathcal{J}_n)_{n\in\Natural}$
depicts a sliding window on the set $\Natural$, of length at most
$q$.

Let us introduce a sequence of convex functions $(\Theta_n:
\hilbert\rightarrow [0,\infty))_{n\in\Natural}$ inductively. For every
$n\in\Natural$, and given a $u_n\in\hilbert$, define the following
\textit{active index set:}
\begin{equation*}
\mathcal{I}_n := \{i\in\mathcal{J}_n:\ u_n\notin S_i\}. 
\end{equation*}
This set identifies those closed convex sets
$\{S_i\}_{i\in\mathcal{I}_n}$, out of $\{S_j\}_{j\in\mathcal{J}_n}$,
which add on new ``information'' to our learning process. The sets
with indexes $\{j\in\mathcal{J}_n: u_n\in S_j\}$ will not be processed
at the time instant $n$.

In the case where $\mathcal{I}_n\neq\emptyset$, we introduce the set
of weights $\{\omega_i^{(n)}\}_{i\in \mathcal{I}_n}\subset(0,1]$, such
that $\sum_{i\in\mathcal{I}_n} \omega_i^{(n)} = 1$. Define, now, the
convex function:
\begin{equation}
\forall x\in\hilbert, \quad \Theta_n(x) := \begin{cases}
 \sum_{i\in\mathcal{I}_n} \frac{\omega_i^{(n)}
d(u_n,S_i)}{L_n} d(x,S_i), &
\text{if}\ \mathcal{I}_n \neq \emptyset,\\
0, & \text{if}\ \mathcal{I}_n = \emptyset,
\end{cases}\label{special.Theta_n}
\end{equation}
where $L_n := \sum_{i\in\mathcal{I}_n} \omega_i^{(n)} d(u_n,S_i)$. We
define $L_n:=0$ for all those $n\in \Natural$ such that
$\mathcal{I}_n=\emptyset$.
\end{definition}

\begin{lemma}\label{lem:properties.example.theta}
The following properties hold true for the sequence of functions
$(\Theta_n)_{n\in\Natural}$ given in \eqref{special.Theta_n}.

\begin{enumerate}[1.]

\item\label{lem:Ln.positive} For every $n\in\Natural$, such that
  $\mathcal{I}_n \neq \emptyset$, we have $L_n>0$.

\item\label{lem:level.sets.example.theta} For every $n\in\Natural$,
  $\lev\Theta_n = \bigcap_{i\in\mathcal{I}_n} S_i$, where we define
  $\bigcap_{i\in\emptyset} S_i:= \hilbert$, to cover also the case
  where $\mathcal{I}_n = \emptyset$.

\item\label{lem:bounded.subgrads} The collection of all the
  subgradients of $(\Theta_n)_{n\in\Natural}$ is bounded, i.e.,
  $\forall n\in\Natural$, $\forall x\in\hilbert$,
  $\norm{\Theta_n'(x)}\leq 1$.

\item\label{lem:subgradient.special.Theta.un} For any
  $n\in\Natural$,
\begin{equation*} 
\Theta_n'(u_n) = \begin{cases} \frac{1}{L_n}
  \sum_{i\in\mathcal{I}_n} \omega_i^{(n)} (u_n - P_{S_i}(u_n)), &
  \mathcal{I}_n\neq \emptyset,\\ 0, & \mathcal{I}_n=\emptyset.
\end{cases}
\end{equation*}

\end{enumerate}
\end{lemma}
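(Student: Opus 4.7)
The plan is to handle the four claims in order, each by direct computation relying on Example~\ref{ex:partial.distance} and the definition of $\Theta_n$ in \eqref{special.Theta_n}.

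For Lemma~\ref{lem:properties.example.theta}.\ref{lem:Ln.positive}, I first note that if $\mathcal{I}_n \neq \emptyset$, then by the very definition of $\mathcal{I}_n$ there exists some $i\in \mathcal{I}_n$ with $u_n\notin S_i$, hence $d(u_n,S_i)>0$ since $S_i$ is closed. Combined with $\omega_i^{(n)}\in(0,1]$ and non-negativity of the remaining terms, $L_n = \sum_{i\in\mathcal{I}_n}\omega_i^{(n)} d(u_n,S_i)>0$.

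For Lemma~\ref{lem:properties.example.theta}.\ref{lem:level.sets.example.theta}, the case $\mathcal{I}_n=\emptyset$ is trivial since $\Theta_n\equiv 0$, matching the convention $\bigcap_{i\in\emptyset}S_i := \hilbert$. Otherwise, $\Theta_n$ is a non-negative sum where each coefficient $\omega_i^{(n)} d(u_n,S_i)/L_n$ is strictly positive (using part~\ref{lem:Ln.positive}), so $\Theta_n(x)\le 0$ forces every $d(x,S_i)=0$ for $i\in \mathcal{I}_n$, i.e.\ $x\in \bigcap_{i\in\mathcal{I}_n}S_i$. Conversely, any such $x$ annihilates every term.

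For Lemma~\ref{lem:properties.example.theta}.\ref{lem:bounded.subgrads}, I apply the sum rule for subdifferentials to \eqref{special.Theta_n}: any $\Theta_n'(x)$ can be written as $\sum_{i\in\mathcal{I}_n}\frac{\omega_i^{(n)}d(u_n,S_i)}{L_n}\, d_i'(x,S_i)$, where $d_i'(x,S_i)\in \partial d(\cdot,S_i)(x)$. By Example~\ref{ex:partial.distance} each $\|d_i'(x,S_i)\|\le 1$, and the scalar weights are non-negative and sum to $1$ by construction of $L_n$. The triangle inequality then yields $\|\Theta_n'(x)\|\le 1$ (and in the trivial case $\mathcal{I}_n=\emptyset$, $\Theta_n\equiv 0$ so $\Theta_n'(x)=0$).

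For Lemma~\ref{lem:properties.example.theta}.\ref{lem:subgradient.special.Theta.un}, I evaluate the same sum at $x=u_n$. For each $i\in \mathcal{I}_n$ we have $u_n\notin S_i$, so Example~\ref{ex:partial.distance} gives the singleton $\partial d(\cdot,S_i)(u_n) = \{(u_n-P_{S_i}(u_n))/d(u_n,S_i)\}$. The factors $d(u_n,S_i)$ cancel, producing
\begin{equation*}
\Theta_n'(u_n) = \sum_{i\in\mathcal{I}_n} \frac{\omega_i^{(n)} d(u_n,S_i)}{L_n}\cdot \frac{u_n - P_{S_i}(u_n)}{d(u_n,S_i)} = \frac{1}{L_n}\sum_{i\in\mathcal{I}_n}\omega_i^{(n)}\bigl(u_n-P_{S_i}(u_n)\bigr),
\end{equation*}
and the case $\mathcal{I}_n=\emptyset$ is again immediate. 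There is no serious obstacle here; the only subtle point is making sure that the sum rule for the subdifferential applies as stated, which is justified by continuity of each $d(\cdot,S_i)$ and the fact that the coefficients are finite and non-negative.
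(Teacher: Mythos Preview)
Your proof is correct and follows essentially the same route as the paper: both use the definition of $\mathcal{I}_n$ for part~\ref{lem:Ln.positive}, positivity of the coefficients for part~\ref{lem:level.sets.example.theta}, the sum rule together with Example~\ref{ex:partial.distance} and the triangle inequality for part~\ref{lem:bounded.subgrads}, and specialization of the same subdifferential formula at $x=u_n$ (where each $\partial d(\cdot,S_i)(u_n)$ is a singleton) for part~\ref{lem:subgradient.special.Theta.un}.
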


\begin{proof}
\begin{enumerate}[1.]

\item Fix arbitrarily an $n\in\Natural$ such that $\mathcal{I}_n\neq
  \emptyset$. By the definition of $\mathcal{I}_n$, $\forall
  i\in\mathcal{I}_n$, $d(u_n,S_i)>0$. Since, also,
  $\omega_i^{(n)}\in(0,1]$, $\forall i\in\mathcal{I}_n$, it is clear
    by the definition of $L_n$ that
    Lemma~\ref{lem:properties.example.theta}.\ref{lem:Ln.positive}
    holds true.

\item Fix arbitrarily an $n\in\Natural$. Assume, first, that
  $\mathcal{I}_n=\emptyset$. By \eqref{special.Theta_n}, it is clear
  that $\lev\Theta_n = \hilbert =: \bigcap_{i\in\emptyset} S_i$.

  Assume, now, that $\mathcal{I}_n\neq \emptyset$, and that
  $\bigcap_{i\in\mathcal{I}_n} S_i \neq \emptyset$. It is clear by
  \eqref{special.Theta_n} that $\bigcap_{i\in\mathcal{I}_n} S_i
  \subset \lev\Theta_n$. Assume, now, an $x\notin
  \bigcap_{i\in\mathcal{I}_n} S_i$, or equivalently, $\exists
  i_0\in\mathcal{I}_n$ such that $d(x,S_{i_0})>0$. Then, one can
  easily verify that $\Theta_n(x)\geq \frac{\omega_{i_0}^{(n)}
    d(u_n,S_{i_0})}{L_n} d(x,S_{i_0})>0$. In other words, $x\notin
  \lev\Theta_n$, and finally $\lev\Theta_n \subset
  \bigcap_{i\in\mathcal{I}_n} S_i$. Notice that the previous arguments
  hold true also in the case where $\bigcap_{i\in\mathcal{I}_n} S_i=
  \emptyset$. This establishes
  Lemma~\ref{lem:properties.example.theta}.\ref{lem:level.sets.example.theta}.

\item Fix arbitrarily an $n\in\Natural$. By \eqref{special.Theta_n},
  basic calculus on subdifferentials \cite{EkelandTemam,
    Rockafellar.Wets} suggests that
\begin{equation*}
\forall x\in\hilbert, \quad \partial\Theta_n(x) = \begin{cases}
\sum_{i\in\mathcal{I}_n} \frac{\omega_i^{(n)} d(u_n,S_i)}{L_n}
\partial d(x,S_i), & \text{if}\ \mathcal{I}_n\neq \emptyset,\\
\{0\}, & \text{if}\ \mathcal{I}_n = \emptyset.
\end{cases}
\end{equation*}
From now and on, we deal only with the case where
$\mathcal{I}_n\neq\emptyset$, since the previous equation
clearly suggests that
Lemma~\ref{lem:properties.example.theta}.\ref{lem:bounded.subgrads}
holds trivially in the case of $\mathcal{I}_n=\emptyset$.

By Example~\ref{ex:partial.distance}, the subgradient $\Theta_n'(x)$
takes the following form:
\begin{align}
\forall x\in\hilbert, \quad \Theta_n'(x) & = 
\sum_{i\in\mathcal{I}_n:\ x\notin S_i} \frac{\omega_i^{(n)}
  d(u_n,S_i)}{L_n} d'(x,S_i) + \sum_{i\in\mathcal{I}_n:\ x\in S_i}
\frac{\omega_i^{(n)} d(u_n,S_i)}{L_n} d'(x,S_i) \nonumber\\
& = \sum_{i\in\mathcal{I}_n:\ x\notin S_i} \frac{\omega_i^{(n)}
  d(u_n,S_i)}{L_n} \frac{x-P_{S_i}(x)}{d(x,S_i)} + \sum_{i\in\mathcal{I}_n:\ x\in S_i}
\frac{\omega_i^{(n)} d(u_n,S_i)}{L_n}
d'(x,S_i). \label{subgradient.special.Theta} 
\end{align}
Hence,
\begin{align*}
\forall x\in\hilbert, \quad \norm{\Theta_n'(x)} & \leq
\sum_{i\in\mathcal{I}_n:\ x\notin 
 S_i} \frac{\omega_i^{(n)} d(u_n,S_i)}{L_n}
 \frac{\norm{x- P_{S_i}(x)}}{d(x,S_i)} + \sum_{i\in\mathcal{I}_n:\ x\in
 S_i} \frac{\omega_i^{(n)} d(u_n,S_i)}{L_n}\cdot 1\\
& = \sum_{i\in\mathcal{I}_n:\ x\notin
 S_i} \frac{\omega_i^{(n)} d(u_n,S_i)}{L_n} + \sum_{i \in\mathcal{I}_n:\ x\in
 S_i} \frac{\omega_i^{(n)} d(u_n,S_i)}{L_n} = 1.
\end{align*}
This establishes
Lemma~\ref{lem:properties.example.theta}.\ref{lem:bounded.subgrads}.

\item
  Lemma~\ref{lem:properties.example.theta}.\ref{lem:subgradient.special.Theta.un}
  is an immediate consequence of
  \eqref{subgradient.special.Theta}.\qedhere

\end{enumerate}
\end{proof}

\begin{algo}\label{ex:subgrad.Thetan}
Assume a sequence of nonempty closed convex sets $(S_n)_{n\in\Natural}
\subset\hilbert$. Moreover, consider a sequence of convex continuous
functions $(\Phi_n:\hilbert \rightarrow \Real)_{n\in\Natural}$, such
that $\lev\Phi_n \neq \emptyset$, $\forall n\in\Natural$. Associated
to each $\Phi_n$ is the relaxed subgradient projection mapping
$T_{\Phi_n}^{(\nu_n)}$ (see Definition~\ref{def:subgrad.projection}),
where $\nu_n\in (0,2)$, $\forall n\in\Natural$.

For an arbitrarily chosen $u_0\in\hilbert$, form the following
sequence:
\begin{equation*}
\forall n\in\Natural, \quad u_{n+1} := \begin{cases}
T_{\Phi_n}^{(\nu_n)}\left(u_n - \lambda_n
\frac{\Theta_n(u_n)}{\norm{\Theta_n'(u_n)}^2}
\Theta_n'(u_n) \right), & \text{if}\ \Theta_n'(u_n) \neq
0,\\
T_{\Phi_n}^{(\nu_n)}(u_n), & \text{if}\ \Theta_n'(u_n) = 0,
\end{cases}
\end{equation*}
where the sequence of functions $(\Theta_n)_{n\in\Natural}$ is
given in Definition~\ref{ex:extrapolated.distance.functions},
$\Theta_n'(u_n)$ is any subgradient of $\Theta_n$ at
$u_n$, and $\lambda_n\in(0,2)$, $\forall n\in\Natural$.

Lemma~\ref{lem:properties.example.theta}.\ref{lem:subgradient.special.Theta.un}
and some elementary algebra lead to the following equivalent
formulation of the previous recursion:
\begin{equation}
\forall n\in\Natural, \quad u_{n+1} = T_{\Phi_n}^{(\nu_n)}\left( u_n
+ \mu_n \left( \sum_{i\in\mathcal{I}_n} \omega_i^{(n)} P_{S_i}(u_n) -
u_n \right)\right), \label{equiv.form.example}
\end{equation}
where $\mu_n:= \lambda_n \mathcal{M}_n$, and 
\begin{equation}
\mathcal{M}_n:= \begin{cases}
\frac{\sum_{i\in\mathcal{I}_n} \omega_i^{(n)} d^2(u_n,S_i)}
     {\norm{\sum_{i\in\mathcal{I}_n} \omega_i^{(n)} (u_n
         -P_{S_i}(u_n))}^2}, & \text{if}\ \sum_{i\in\mathcal{I}_n} \omega_i^{(n)}
     (u_n -P_{S_i}(u_n)) \neq 0,\\
1, & \text{otherwise}.
\end{cases}\label{Mu_n}
\end{equation}

To avoid any ambiguity in the case where $\mathcal{I}_n = \emptyset$,
we define in \eqref{equiv.form.example} and \eqref{Mu_n}:
$\sum_{i\in\emptyset} \omega_i^{(n)} (P_{S_i}(u_n)-u_n) :=
\sum_{i\in\emptyset} \omega_i^{(n)} P_{S_i}(u_n) -u _n := 0$. Notice
also by the convexity of $\norm{\cdot}^2$ that $\mathcal{M}_n \geq 1$,
and that since $\lambda_n\in (0,2)$, we obtain
$\mu_n\in(0,2\mathcal{M}_n)$, i.e., the extrapolation parameter
$\mu_n$ is able to take values greater than or equal to $2$, $\forall
n\in\Natural$.
\end{algo}

It is needless to say that the results presented in
Theorem~\ref{thm:the.analysis} hold true also for
Algorithm~\ref{ex:subgrad.Thetan}. Nevertheless, one can establish
additional properties for Algorithm~\ref{ex:subgrad.Thetan}, based on
the following assumptions.

\begin{assumption}\label{ass:main.example} Regarding
  Definition~\ref{ex:extrapolated.distance.functions} and
  Algorithm~\ref{ex:subgrad.Thetan}, assume the following.
\begin{enumerate}[1.]

\item\label{ass:omega} Let $\check{\omega} :=
  \inf\{\omega_i^{(n)}:\ i\in\mathcal{I}_n \neq\emptyset,
  n\in\Natural\}>0$.

\item\label{ass:Phi.bounded.subgrads} The sequences
  $(\Phi_n'(u_n))_{n\in\Natural}$ and
  $(\Phi_n'(T_{\Theta_n}^{(\lambda_n)}(u_n)))_{n\in\Natural}$ are
  bounded, i.e., there exists a $D >0$ such that $\forall
  n\in\Natural$, $\max\left\{\norm{\Phi_n'(u_n)},
  \norm{\Phi_n'(T_{\Theta_n}^{(\lambda_n)}(u_n))}\right\} \leq D$.

\item\label{ass:nonnegative.Phin} $\forall n\in\Natural$,
  $\Phi_n: \hilbert \rightarrow [0,\infty)$. See, for example,
  Definition~\ref{def:l1.loss}.

\end{enumerate}
\end{assumption}

\begin{theorem}\label{thm:subgrad.Thetan} The following statements are
  valid for Algorithm~\ref{ex:subgrad.Thetan}.

\begin{enumerate}[1.]

\item\label{thm:subgrad.Thetan.bound.Ln} Let
  Assumption~\ref{main.assumptions}.\ref{ass:nonempty.intersection}
  hold true. Then, there exists a $D> 0$ such that $\forall
  n\in\Natural$, $L_n\leq D$.

\item\label{thm:subgrad.Thetan.distances.go.to.zero} Let
  Assumptions~\ref{main.assumptions}.\ref{ass:nonempty.intersection},
  \ref{main.assumptions}.\ref{ass:lambda.epsilon}, and
  \ref{ass:main.example}.\ref{ass:omega} hold true. Then,
  $\lim_{n\rightarrow\infty} \max\{d(u_n,S_j):\ j\in\mathcal{J}_n\} =
  0$.

\item\label{thm:subgrad.Thetan.liminf} If
  Assumptions~\ref{main.assumptions}.\ref{ass:nonempty.intersection},
  \ref{main.assumptions}.\ref{ass:lambda.epsilon},
  \ref{main.assumptions}.\ref{ass:strong.clusters}, and
  \ref{ass:main.example}.\ref{ass:omega} hold true, then
  $\mathfrak{S}((u_n)_{n\in\Natural}) \subset
  \limsup_{n\rightarrow\infty} S_n$. Moreover, if there exists a
  $u_*\in\hilbert$ such that $\lim_{n\rightarrow\infty} u_n=u_*$,
  i.e., $\mathfrak{S}((u_n)_{n\in\Natural}) = \{u_*\}$, then $u_* \in
  \liminf_{n\rightarrow\infty} S_n$.

\item\label{thm:Phin.go.to.zero} Let
  Assumptions~\ref{main.assumptions}.\ref{ass:nonempty.intersection},
  \ref{main.assumptions}.\ref{ass:lambda.epsilon},
  \ref{ass:l1.loss}.\ref{ass:nu}, and
  \ref{ass:main.example}.\ref{ass:Phi.bounded.subgrads} hold
  true. Then, $\limsup_{n\rightarrow\infty}\Phi_n(u_n) \leq 0$. If, in
  addition,
  Assumption~\ref{ass:main.example}.\ref{ass:nonnegative.Phin} holds
  true, then $\lim_{n\rightarrow\infty}\Phi_n(u_n) = 0$.

\item\label{thm:sparsity.example} The following result applies to the
  next section where a system/signal recovery task is
  considered. Assume Algorithm~\ref{ex:subgrad.Thetan} for the case
  where $\hilbert:= \Real^L$, $L\in \Naturalstar$, equipped with the
  standard vector inner product. Assume, also, that the sequence of
  functions $(\Phi_n)_{n\in\Natural}$ is given by
  Definition~\ref{def:l1.loss}. Let
  Assumptions~\ref{main.assumptions}.\ref{ass:nonempty.intersection},
  \ref{main.assumptions}.\ref{ass:lambda.epsilon},
  \ref{ass:l1.loss}.\ref{ass:weight.vectors} and
  \ref{ass:l1.loss}.\ref{ass:nu} hold true. Then,
  $\mathfrak{S}((\bm{u}_n)_{n\in\Natural}) \subset
  \limsup_{n\rightarrow\infty} B_{\ell_1}[\bm{w}_n,\rho]$. If there
  exists a $\bm{u}_*$ such that $\lim_{n\rightarrow\infty}\bm{u}_n =
  \bm{u}_*$, then $\bm{u}_*\in \liminf_{n\rightarrow\infty}
  B_{\ell_1}[\bm{w}_n,\rho]$.

\end{enumerate}
\end{theorem}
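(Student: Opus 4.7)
The plan is to treat the five items in order, exploiting the fact that each of them reduces to a combination of Theorem~\ref{thm:the.analysis} with the specific structure of Definition~\ref{ex:extrapolated.distance.functions} and Definition~\ref{def:l1.loss}. For item~\ref{thm:subgrad.Thetan.bound.Ln}, I would pick any $v\in\Omega$; then Theorem~\ref{thm:the.analysis}.\ref{thm:convergent.sequence} gives boundedness of $(\norm{u_n-v})_{n\in\Natural}$, and since $v\in S_i$ for every $i\in\mathcal{I}_n$ (because $\Omega\subset \lev\Theta_n = \bigcap_{i\in\mathcal{I}_n}S_i$ by Lemma~\ref{lem:properties.example.theta}.\ref{lem:level.sets.example.theta}), the estimate $d(u_n,S_i)\leq \norm{u_n-v}$ together with $\sum_{i\in\mathcal{I}_n}\omega_i^{(n)}=1$ yields $L_n\leq\norm{u_n-v}\leq D$.

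For item~\ref{thm:subgrad.Thetan.distances.go.to.zero}, I would first invoke Lemma~\ref{lem:properties.example.theta}.\ref{lem:bounded.subgrads} (boundedness of the subgradients of $\Theta_n$) to apply Theorem~\ref{thm:the.analysis}.\ref{asymptotic.optimality}, giving $\Theta_n(u_n)\to 0$. Writing $\Theta_n(u_n) = L_n^{-1}\sum_{i\in\mathcal{I}_n}\omega_i^{(n)}d^2(u_n,S_i)$ and using item~\ref{thm:subgrad.Thetan.bound.Ln} I get $\sum_{i\in\mathcal{I}_n}\omega_i^{(n)}d^2(u_n,S_i)\to 0$; combined with $\omega_i^{(n)}\geq\check{\omega}>0$ this forces $\max_{i\in\mathcal{I}_n}d(u_n,S_i)\to 0$. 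The indices in $\mathcal{J}_n\setminus\mathcal{I}_n$ contribute zero distance by definition of $\mathcal{I}_n$, so the full maximum over $\mathcal{J}_n$ tends to $0$. Item~\ref{thm:subgrad.Thetan.liminf} then follows by setting $x_n:=u_n$, noting $n\in\mathcal{J}_n$, so $\norm{(I-T_{S_n}^{(\alpha_n)})(u_n)}=\alpha_n d(u_n,S_n)\to 0$ for any $\alpha_n\in[\epsilon,2)$; Example~\ref{ex:distance.function} supplies Assumption~\ref{ass:liminf.Fn} and Theorem~\ref{thm:liminf} delivers both the $\limsup$ inclusion and, when $(u_n)$ converges, the $\liminf$ inclusion.

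Item~\ref{thm:Phin.go.to.zero} is the subtlest step, and I expect the main obstacle to be bridging $\Phi_n$ evaluated at $T_{\Theta_n}^{(\lambda_n)}(u_n)$ back to $\Phi_n(u_n)$. Under Assumption~\ref{ass:l1.loss}.\ref{ass:nu} the attracting constants $(2-\nu_n)/\nu_n$ are bounded away from $0$ and $\infty$, hence Assumption~\ref{main.assumptions}.\ref{ass:eta} holds for $T_n:=T_{\Phi_n}^{(\nu_n)}$, and Theorem~\ref{thm:the.analysis}.\ref{thm:T_n.demiclosed} yields $(I-T_{\Phi_n}^{(\nu_n)})(T_{\Theta_n}^{(\lambda_n)}(u_n))\to 0$. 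Using the explicit expression $\norm{(I-T_{\Phi_n}^{(\nu_n)})(x)}= \nu_n [\Phi_n(x)]^+/\norm{\Phi_n'(x)}$ from Example~\ref{def:subgrad.projection}, together with $\nu_n\geq\epsilon'$ and Assumption~\ref{ass:main.example}.\ref{ass:Phi.bounded.subgrads}, I get $[\Phi_n(T_{\Theta_n}^{(\lambda_n)}(u_n))]^+\to 0$. The subgradient inequality $\Phi_n(u_n)\leq \Phi_n(T_{\Theta_n}^{(\lambda_n)}(u_n)) + \norm{\Phi_n'(u_n)}\,\norm{u_n - T_{\Theta_n}^{(\lambda_n)}(u_n)}$, combined with $\norm{u_n - T_{\Theta_n}^{(\lambda_n)}(u_n)}\to 0$ from Theorem~\ref{thm:the.analysis}.\ref{thm:fraction.goes.to.zero} and boundedness of $\Phi_n'(u_n)$, then gives $\limsup_{n\to\infty}\Phi_n(u_n)\leq 0$; Assumption~\ref{ass:main.example}.\ref{ass:nonnegative.Phin} promotes this to a limit.

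Finally, for item~\ref{thm:sparsity.example} I specialize $T_n:=T_{\Phi_n}^{(\nu_n)}$ with $\Phi_n$ as in Definition~\ref{def:l1.loss}; Example~\ref{def:subgrad.projection} gives $\Fix(T_n)=\lev\Phi_n = B_{\ell_1}[\bm{w}_n,\rho]$, Lemma~\ref{lem:l1.function}.\ref{lem:l1.liminf.ass} supplies Assumption~\ref{main.assumptions}.\ref{ass:extra.on.T_n}, and the bounds on $\nu_n$ again yield Assumption~\ref{main.assumptions}.\ref{ass:eta}. In the finite-dimensional space $\Real^L$, bounded sequences have strong cluster points, so $\mathfrak{S}((\bm u_n)_{n\in\Natural})\neq\emptyset$ whenever $(\bm u_n)$ is bounded; Theorem~\ref{thm:the.analysis}.\ref{thm:liminf.T_n} then delivers $\mathfrak{S}((\bm u_n)_{n\in\Natural})\subset \limsup_{n\to\infty} B_{\ell_1}[\bm w_n,\rho]$, and its singleton clause gives the $\liminf$ statement when the whole sequence converges.
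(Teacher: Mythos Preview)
Your proposal is correct and follows essentially the same route as the paper's proof. In each item you invoke the same auxiliary results (Theorem~\ref{thm:the.analysis}, Lemma~\ref{lem:properties.example.theta}, Example~\ref{ex:distance.function}, Theorem~\ref{thm:liminf}, Lemma~\ref{lem:l1.function}.\ref{lem:l1.liminf.ass}) and chain them in the same order; your bound in item~\ref{thm:subgrad.Thetan.bound.Ln} is in fact slightly tighter than the paper's (you use $d(u_n,S_i)\le\norm{u_n-v}$ directly, while the paper detours through a factor of~$2$), and your use of $[\cdot]^+$ in item~\ref{thm:Phin.go.to.zero} compactly packages the paper's case split on whether $\Phi_n'(T_{\Theta_n}^{(\lambda_n)}(u_n))$ vanishes. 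The only omission is that $\Omega\subset\lev\Theta_n$ is guaranteed only for $n\in N$, so the finitely many $n\notin N$ need a separate (trivial) bound on $L_n$; the paper handles this explicitly by enlarging $D$.
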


\begin{proof}
\begin{enumerate}[1.]

\item Notice, that
$\forall n\in N$, $\forall i\in\mathcal{I}_n$, $\forall
  v\in\Omega$,
\begin{align*}
d(u_n,S_i) & = \norm{u_n -
 P_{S_i}(u_n)} \leq \norm{u_n - v} + \norm{v -
 P_{S_i}(u_n)} \leq 2 \norm{u_n - v} \leq 2 \norm{u_{n_0} - v},
\end{align*}
where $n_0:= \min N$, the second inequality follows from
Example~\ref{ex:relaxed.projection}, and the third one from
\eqref{monotonicity.Omega}. Now, by the definition of $L_n$, $\forall
n\in N$,
\begin{equation*}
L_n = \sum_{i\in\mathcal{I}_n} \omega_i^{(n)}
d(u_n,S_i) \leq 2 \sum_{i\in\mathcal{I}_n}
\omega_i^{(n)} \norm{u_{n_0} - v} = 2 \norm{u_{n_0} - v}. 
\end{equation*}
Choose, now, any $D > \max\{2\norm{u_{n_0} - v}, L_0, \ldots, L_{n_0
  -1}\}$, and notice that for such a $D$ the claim holds true.

\item Recall, here, by
  Definition~\ref{ex:extrapolated.distance.functions}, that if $u_n$
  is such that $\mathcal{I}_n = \emptyset$, then $d(u_n, S_j)=0$,
  $\forall j\in\mathcal{J}_n$. Obviously, this is equivalent to
  $\max\{d(u_n,S_j):\ j\in \mathcal{J}_n\} = 0$.

  Hence, we deal only with the case of $\mathcal{I}_n \neq
  \emptyset$. For this case, we observe by \eqref{special.Theta_n}
  that
\begin{align}
\Theta_n(u_n)  & = \sum_{i\in\mathcal{I}_n} \frac{\omega_i^{(n)}
d^2(u_n,S_i)}{L_n} \geq \sum_{i\in\mathcal{I}_n} \frac{\omega_i^{(n)}
d^2(u_n,S_i)}{D} \nonumber \\ 
&\geq \frac{\check{\omega}}{D} \sum_{i\in\mathcal{I}_n}
d^2(u_n,S_i) \geq \frac{\check{\omega}}{D}
\max\{d^2(u_n,S_i):\ i\in
\mathcal{I}_n\}, \label{subgrad.Thetan.an.inequality} 
\end{align}
where the existence of $D>0$ is guaranteed by
Theorem~\ref{thm:subgrad.Thetan}.\ref{thm:subgrad.Thetan.bound.Ln}. 

In order to establish
Theorem~\ref{thm:the.analysis}.\ref{asymptotic.optimality}, i.e.,
$\lim_{n\rightarrow\infty}\Theta_n(u_n)=0$, we have used
Assumption~\ref{main.assumptions}.\ref{ass:bounded.subgradients},
which imposes a bound on the sequence of subgradients
$(\Theta_n'(u_n))_{n\in\Natural}$. However, for the case at hand,
Lemma~\ref{lem:properties.example.theta}.\ref{lem:bounded.subgrads}
clearly suggests that boundedness holds true by default, that
Assumption~\ref{main.assumptions}.\ref{ass:bounded.subgradients} is
not necessary here, and that
Assumptions~\ref{main.assumptions}.\ref{ass:nonempty.intersection},
\ref{main.assumptions}.\ref{ass:lambda.epsilon} are sufficient for
establishing $\lim_{n\rightarrow\infty}\Theta_n(u_n)=0$. Having this
result hold true, apply $\lim_{n\rightarrow\infty}$ on both sides of
\eqref{subgrad.Thetan.an.inequality} to obtain
$\lim_{n\rightarrow\infty} \max\{d(u_n,S_i):\ i\in \mathcal{I}_n\} =
0$.

Recall, now, by the definition of $\mathcal{I}_n$, in
Definition~\ref{ex:extrapolated.distance.functions}, that $\forall j\in
\mathcal{J}_n \setminus \mathcal{I}_n$, $u_n\in S_j \Leftrightarrow
d(u_n,S_j)=0$. This clearly implies that $\max\{d(u_n,S_i):\ i\in
\mathcal{I}_n\} = \max\{d(u_n,S_j):\ j\in \mathcal{J}_n\}$. This
equality and the previously obtained result $\lim_{n\rightarrow\infty}
\max\{d(u_n,S_i):\ i\in \mathcal{I}_n\} = 0$ establish
Theorem~\ref{thm:subgrad.Thetan}.\ref{thm:subgrad.Thetan.distances.go.to.zero}.

\item We have already seen in
  Theorem~\ref{thm:subgrad.Thetan}.\ref{thm:subgrad.Thetan.distances.go.to.zero}
  that $\lim_{n\rightarrow\infty} \max\{d(u_n,S_j):\ j\in
  \mathcal{J}_n\}=0$. Since, by definition, $n\in\mathcal{J}_n$,
  $\forall n\in\Natural$, the previous result implies that
  $\lim_{n\rightarrow\infty} d(u_n, S_n)= \lim_{n\rightarrow\infty}
  \norm{(I-P_{S_n})(u_n)} = 0$. Having these in mind,
  Theorem~\ref{thm:subgrad.Thetan}.\ref{thm:subgrad.Thetan.liminf}
  becomes a direct consequence of Theorem~\ref{thm:liminf} and
  Example~\ref{ex:distance.function}.

\item Here, we will utilize
  Theorems~\ref{thm:the.analysis}.\ref{thm:fraction.goes.to.zero} and
  \ref{thm:the.analysis}.\ref{thm:T_n.demiclosed}. To this end, notice
  that regarding the sequence of mappings
  $(T_{\Phi_n}^{(\nu_n)})_{n\in\Natural}$,
  Assumption~\ref{main.assumptions}.\ref{ass:eta} is satisfied here;
  indeed, notice that $\forall n\in\Natural$, $\frac{\epsilon'}{2}
  \leq \frac{2-\nu_n}{\nu_n}\leq \frac{2}{\epsilon'}$.

Now, Definition~\ref{def:subgradient} suggests that $\forall
n\in\Natural$, $\innprod{T_{\Theta_n}^{(\lambda_n)}(u_n) -
  u_n}{\Phi'_n(u_n)} + \Phi_n(u_n)\leq
\Phi_n(T_{\Theta_n}^{(\lambda_n)}(u_n))$. Notice that for all
those $n\in\Natural$ such that
$\Phi_n'(T_{\Theta_n}^{(\lambda_n)}(u_n))\neq 0$, we have
\begin{align*}
\Phi_n(u_n) & \leq \Phi_n(T_{\Theta_n}^{(\lambda_n)}(u_n)) +
\innprod{u_n - T_{\Theta_n}^{(\lambda_n)}(u_n)}{\Phi'_n(u_n)} \\
& \leq \frac{\norm{\Phi_n'(T_{\Theta_n}^{(\lambda_n)}(u_n))}}{\nu_n}
\, \nu_n \frac{|\Phi_n(T_{\Theta_n}^{(\lambda_n)}(u_n))|}
     {\norm{\Phi_n'(T_{\Theta_n}^{(\lambda_n)}(u_n))}} + \norm{u_n -
       T_{\Theta_n}^{(\lambda_n)}(u_n)} \norm{\Phi'_n(u_n)} \\
& \leq \frac{D}{\epsilon'}
     \norm{(I-T_{\Phi_n}^{(\nu_n)})(T_{\Theta_n}^{(\lambda_n)}(u_n))}
     + D \norm{u_n - T_{\Theta_n}^{(\lambda_n)}(u_n)}.
\end{align*}

For all those $n\in\Natural$ where
$\Phi_n'(T_{\Theta_n}^{(\lambda_n)}(u_n)) = 0$, we have by
Definition~\ref{def:subgradient} that
$T_{\Theta_n}^{(\lambda_n)}(u_n)\in \argmin_{v\in\hilbert} \Phi_n(v)$,
and since $\lev\Phi_n \neq \emptyset$, we obtain
$\Phi_n(T_{\Theta_n}^{(\lambda_n)}(u_n))\leq 0$. Therefore, by similar
steps as previously, we obtain the following inequality for such
$n\in\Natural$: $\Phi_n(u_n) \leq D \norm{u_n -
  T_{\Theta_n}^{(\lambda_n)}(u_n)}$.

If we apply $\limsup_{n\rightarrow\infty}$ on both sides of the previous
inequalities, and if we recall
Theorems~\ref{thm:the.analysis}.\ref{thm:fraction.goes.to.zero} and
\ref{thm:the.analysis}.\ref{thm:T_n.demiclosed}, then we obtain 
$\limsup_{n\rightarrow\infty}\Phi_n(u_n) \leq 0$. Notice that in the
case where $\Phi_n :\hilbert \rightarrow [0,\infty)$, $\forall
  n\in\Natural$, then the previous analysis leads to
  $\lim_{n\rightarrow\infty} \Phi_n(u_n)=0$. This establishes
  Theorem~\ref{thm:subgrad.Thetan}.\ref{thm:Phin.go.to.zero}.

\item First, notice that since we work in the Euclidean space
  $\Real^L$, $\mathfrak{S}((\bm{u}_n)_{n\in\Natural}) =
  \mathfrak{W}((\bm{u}_n)_{n\in\Natural})$. Hence, the fact
  $\mathfrak{S}((\bm{u}_n)_{n\in\Natural}) \neq \emptyset$ is
  guaranteed by
  Theorem~\ref{thm:the.analysis}.\ref{thm:nonempty.weak.cluster.points}. Now,
  it can be verified that
  Theorem~\ref{thm:subgrad.Thetan}.\ref{thm:sparsity.example} is a
  direct consequence of
  Theorems~\ref{thm:the.analysis}.\ref{thm:nonempty.weak.cluster.points},
  \ref{thm:the.analysis}.\ref{thm:liminf.T_n}, and
  Lemma~\ref{lem:l1.function}.\ref{lem:l1.liminf.ass}.\qedhere

\end{enumerate}
\end{proof}


\section{Application: Online Sparsity-Aware System/Signal
  Recovery} \label{sec:application}

The present section will demonstrate the potential of the previously
introduced algorithms by devising a time-adaptive method for the
important, nowadays, sparse system/signal recovery task. In
particular, we will use Algorithm~\ref{ex:subgrad.Thetan} to derive a
low-complexity and similarly effective variant of the technique
introduced in \cite{Slav.Kops.Theo.ICASSP10, kst@tsp11}.

Sparsity is the key characteristic of systems or signals whose
representation, by means of some basis in some domain, consists of
only a few nonzero coefficients, while the majority of them retain
values of negligible size. The exploitation of sparsity has been
attracting recently an interest of exponential growth under the
\textit{Compressive Sensing} or \textit{Sampling (CS)} framework
\cite{Candes.ICM06, Donoho2006, Candes.Wakin.intro.cs}. In principle,
CS allows the estimation of sparse signals and systems using fewer
measurements than those previously thought to be necessary. More
importantly, recovery is realized by mobilizing efficient
constrained minimization schemes. Indeed, it has been shown that
sparsity is favored by $\ell_1$ constrained solutions
\cite{Candes.Wakin.intro.cs, CandesWakinBoyd08,
  Daubechies.sparse.projected.gradient.method, Daubechies.IRLS}.

Recall, here, that given two integers $j_1\leq j_2$, the notation
$\overline{j_1,j_2}$ stands for the set $\{j_1, j_1+1, \ldots,
j_2\}$. Assume a vector $\bm{x}_*:= [x_{*,1}, \ldots, x_{*,L}]^t$ in
the Euclidean space $\Real^L$, $L\in\Naturalstar$, where the
superscript $t$ stands for vector transposition. If the support of
$\bm{x}_*$ is defined as $\supp(\bm{x}_*):=
\{i\in\overline{1,L}:\ x_{*,i} \neq 0\}$, and the $\ell_0$ norm of
$\bm{x}_*$ is defined as the cardinality of its support, i.e.,
$\norm{\bm{x}_*}_{\ell_0}:= \# \supp(\bm{x}_*)$, by the term
``sparse'' $\bm{x}_*$, we refer to the case where
$\norm{\bm{x}_*}_{\ell_0}$ is considerably smaller than $L$.

The majority of CS techniques deal with the problem of estimating a
sparse system $\bm{x}_*$, based on a number $K (<L)$ of
measurements $(d_n)_{n=0}^{K-1} \subset\Real$ that are generated by
the following linear regression model (see \eqref{model}):
\begin{equation}
d_n = \bm{a}_n^t\bm{x}_*  + \zeta_n, \quad\forall n\in\Natural.
\label{linear.regression.model}
\end{equation}
Here, $(\bm{a}_n)_{n\in\Natural} \subset \Real^L$ are the input
vectors, which excite the unknown $\bm{x}_*$, and 
$(\zeta_n)_{n\in\Natural}$ is a real-valued discrete-time stochastic
process which stands for the contaminating additive noise.

A well-known \textit{batch} method for estimating the sparse
$\bm{x}_*$, based on a limited number $K<L$ of measurements, is
provided by the \textit{Least-Absolute Shrinkage and Selection
  Operator (LASSO) \cite{Tibshirani.Lasso, stanford.stats.book}:}
\begin{equation*}
\min \{\norm{\bm{A} \bm{x}-
  \bm{d}}^2:\ \norm{\bm{x}}_{\ell_1}\leq \norm{\bm{x}_*}_{\ell_1},
\bm{x}\in\Real^L\},
\end{equation*}
where $\norm{\cdot}$ stands for the classical Euclidean norm of a
vector, $\norm{\cdot}_{\ell_1}$ for the $\ell_1$ norm, i.e.,
$\norm{\bm{x}}_{\ell_1} := \sum_{j=1}^L |x_j|$, $\forall \bm{x}:=
[x_1, \ldots, x_L]^t\in \Real^L$, $\bm{d} := [d_0,\ldots, d_{K-1}]^t
\in\Real^K$, and $\bm{A}\in \Real^{K\times L}$ is the matrix whose
rows are $(\bm{a}_n^t)_{n=0}^{K-1}$. We stress here that the
term ``batch'' method means that the data $(\bm{a}_n,
d_n)_{n=0}^{K-1}$ have to be available prior to the application of LASSO.

With only a few recent exceptions, i.e., \cite{ChenHero09,
  angelosante.occd, myyy.soft.thres.icassp10,
  Slav.Kops.Theo.ICASSP10, kst@tsp11}, the majority of the proposed, so
far, CS techniques are appropriate for batch mode operation
\cite{Candes.Wakin.intro.cs, CandesWakinBoyd08, CandesRombergTao06,
  CandesTao05, Daubechies.sparse.projected.gradient.method,
  Daubechies.IRLS}. In other words, one has to wait until a fixed and
predefined number $K$ of training data $\{\bm{a}_n, d_n\}_{n=0}^{K-1}$
is available prior to application of CS processing methods, e.g.,
LASSO, in order to recover the corresponding signal/system
estimate. Dynamic online operation for updating and improving
estimates, as new measurements become available, is not feasible by
batch processing methods. The development of efficient,
\textit{time-adaptive, sparsity-aware} techniques is of great
importance in engineering, especially in cases where the signal or
system under consideration is time-varying and/or the available
storage resources are limited.

Moving along the path introduced in \cite{ChenHero09,
  angelosante.occd, myyy.soft.thres.icassp10, Slav.Kops.Theo.ICASSP10,
  kst@tsp11}, the present section will deal with the case where
$\bm{x}_*$ is not only sparse but it is also allowed to be
time-varying. For this reason, the number $K$ of available data is
allowed to take values towards $\infty$. In this sense, the studies
\cite{ChenHero09, angelosante.occd, myyy.soft.thres.icassp10,
  Slav.Kops.Theo.ICASSP10, kst@tsp11} operate in a framework that is
different than the standard CS scenario. The major objective is no
longer only the estimation of the sparse signal or system, based on a
limited number of measurements. Letting $K \rightarrow\infty$ in the
design, the additional task is the capability of the estimator to
track possible variations of the unknown sparse system. Moreover, this
has to take place at an affordable computational complexity, as
required by most real time applications, where time-adaptive
estimation is of interest. Consequently, the batch
\textit{sparsity-aware} techniques developed under the CS framework,
e.g., LASSO or one of its variants, become unsuitable under
time-varying scenarios.  The focus, now, becomes the development of a
framework that 1) exploits sparsity, 2) exhibits fast convergence to
error floors that are as close as possible to those obtained by their
batch counterparts, 3) offers good tracking performance, and 4) has
low computational demands in order to meet the stringent time
constraints that are imposed by most real time operation
scenarios. Such a framework was demonstrated in
\cite{myyy.soft.thres.icassp10, Slav.Kops.Theo.ICASSP10, kst@tsp11,
  skt@icassp11, kopsinis.dsp11, skt.eusipco11}. Here, we focus on
\cite{Slav.Kops.Theo.ICASSP10, kst@tsp11}. Motivated by the previously
presented Algorithm~\ref{ex:subgrad.Thetan}, we devise a variant of
\cite{Slav.Kops.Theo.ICASSP10, kst@tsp11}, which shows similar
performance to \cite{Slav.Kops.Theo.ICASSP10, kst@tsp11}, albeit its
lower computational requirements.

The information at our disposal is the sequence of training data
$(\bm{a}_n, d_n)_{n\in\Natural}$, the a-priori knowledge that the
unknown $\bm{x}_*$ in \eqref{linear.regression.model} is sparse, as
well as an estimate of the cardinality of the support of $\bm{x}_*$,
i.e., $\norm{\bm{x}_*}_{\ell_0}$. In the sequel, we will demonstrate a
way to incorporate the a-priori knowledge of the estimate of
$\norm{\bm{x}_*}_{\ell_0}$ in the design as a series of closed convex
sets.

In the spirit of Algorithm~\ref{ex:subgrad.Thetan}, we begin by
introducing a sequence of closed convex sets $(S_n)_{n\in\Natural}$,
which associate to the available training data $(\bm{a}_n,
d_n)_{n\in\Natural}$, and quantify the deviation from the adopted
model of \eqref{linear.regression.model} by the introduction of a
user-defined tolerance $\xi\geq 0$.

\begin{definition}[Closed hyperslab]\label{def:hyperslab}
Given the online training data $(\bm{a}_n, d_n)_{n\in\Natural} \subset
\Real^L \times \Real$, and a user-defined $\xi>0$, we define the
following sequence of closed convex sets, called \textit{closed
  hyperslabs:}
\begin{equation*}
\forall n\in\Natural, \quad S_n:= \{\bm{x}\in\Real^L: |d_n- \bm{a}_n^t
\bm{x}| \leq \xi\}.
\end{equation*}
The metric projection mapping $P_{S_n}$ can be analytically computed
\cite{tsy.sp.magazine, ksi.Beam.TSP}, it breaks down to the metric
projection onto a hyperplane, and its computational complexity scales
linearly to the number of unknowns $L$.
\end{definition}

In this section we mobilize Algorithm~\ref{ex:subgrad.Thetan}, where
$(S_n)_{n\in\Natural}$ becomes the sequence of closed hyperslabs of
Definition~\ref{def:hyperslab}, and $(\Phi_n)_{n\in\Natural}$ is the
sequence of sparsity-aware functions introduced in
Definition~\ref{def:l1.loss}. The Algorithm~\ref{ex:subgrad.Thetan},
with the metric projection mapping $P_{B_{\ell_1}[\bm{w}_n,\rho]}$
used instead of $T_{\Phi_n}^{(\nu_n)}$, was introduced in
\cite{Slav.Kops.Theo.ICASSP10, kst@tsp11}. The necessary complexity in
order to compute the $P_{B_{\ell_1}[\bm{w}_n,\rho]}$ is of order
$\mathcal{O}(L\log_2 L)$, needed for a sorting operation, and
$\mathcal{O}(L)$ multiplications and additions
\cite{Slav.Kops.Theo.ICASSP10, kst@tsp11}. In the present study, due
to the utilization of the relaxed subgradient projection mapping
$T_{\Phi_n}^{(\nu_n)}$ in Algorithm~\ref{ex:subgrad.Thetan}, together with
the simplicity of the subgradients of $\Phi_n$, seen in
Table~\ref{table:subgradient.linear.l1norm}, we are able to cut down
the computational complexity of the algorithm to $\mathcal{O}(L)$
operations. As it will be made clear by the subsequent numerical
experiments, the Algorithm~\ref{ex:subgrad.Thetan} results into a
similar performance to its predecessor \cite{Slav.Kops.Theo.ICASSP10,
  kst@tsp11}.

The reason for introducing a series of weighted $\ell_1$-balls
$B_{\ell_1}[\bm{w}_n, \rho]$, instead of the standard unweighted one
$B_{\ell_1}[\bm{1}, \rho]$, is that 1) we have observed that the
weighted $\ell_1$-balls, introduced in Definition~\ref{def:l1.loss},
offer enhanced convergence speed, as also demonstrated in
\cite{CandesWakinBoyd08, Daubechies.IRLS} in a different context, and
2) the weighted balls help us easily incorporate the a-priori
knowledge of the cardinality of the support of $\bm{x}_*$, i.e.,
$\norm{\bm{x}_*}_{\ell_0}$, in the radius $\rho$, as the following
lemma suggests.

\begin{lemma}\label{lem:l1.ball.radius} Assume that the sequence
  $(\bm{u}_n)_{n\in\Natural}$, generated by
  Algorithm~\ref{ex:subgrad.Thetan}, converges to the desirable
  $\bm{x}_*$. Then, there exists an $N\in \Natural_{\infty}$ such that
  $\forall \rho\geq 
  \norm{\bm{x}_*}_{\ell_0}$, $\forall n\in N$, $\bm{u}_n \in
  B_{\ell_1}[\bm{w}_n, \rho]$.
\end{lemma}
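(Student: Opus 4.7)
The plan is to leverage the coordinate-wise convergence $u_{n,j}\to x_{*,j}$ that is implied by $\bm{u}_n\to\bm{x}_*$, together with the reweighting rule that underlies the construction of $(\bm{w}_n)$, in order to show that the weighted $\ell_1$ mass $\sum_{j=1}^L w_{n,j}|u_{n,j}|$ is eventually bounded above by $\norm{\bm{x}_*}_{\ell_0}$. Since the ball $B_{\ell_1}[\bm{w}_n,\rho]$ is nondecreasing in $\rho$, this single numerical bound will yield the containment for every admissible $\rho$ simultaneously, using the same cofinite index set $N$, which is exactly the form demanded by the lemma.

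First, I would split $\overline{1,L}$ into the true support $J_*:=\supp(\bm{x}_*)$ and its complement. For $j\notin J_*$ one has $u_{n,j}\to 0$, and since $w_{n,j}\leq\hat\epsilon$ by Assumption~\ref{ass:l1.loss}.\ref{ass:weight.vectors}, the corresponding contribution $w_{n,j}|u_{n,j}|$ vanishes in the limit. For $j\in J_*$ one has $|x_{*,j}|>0$, and I would invoke the specific reweighting rule (namely, $w_{n,j}$ being a clipped reciprocal of $|u_{n-1,j}|+\epsilon'$, or an analogous surrogate, with $\epsilon'>0$ small): once $n$ is large enough, clipping into $[\check\epsilon,\hat\epsilon]$ is no longer binding on support coordinates, and $w_{n,j}|u_{n,j}|$ tends to $|x_{*,j}|/(|x_{*,j}|+\epsilon')$, which lies strictly below~$1$.

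Summing these limits over the finitely many coordinates yields
\[
\lim_{n\to\infty}\sum_{j=1}^{L}w_{n,j}|u_{n,j}|=\sum_{j\in J_*}\frac{|x_{*,j}|}{|x_{*,j}|+\epsilon'}<\#J_*=\norm{\bm{x}_*}_{\ell_0}.
\]
The strict gap between this limit and $\norm{\bm{x}_*}_{\ell_0}$ supplies an index $N_0\in\Natural$ past which $\sum_{j=1}^L w_{n,j}|u_{n,j}|\leq\norm{\bm{x}_*}_{\ell_0}$; declaring $N:=\{n\in\Natural:n\geq N_0\}\in\Natural_{\infty}$, I conclude that for every $\rho\geq\norm{\bm{x}_*}_{\ell_0}$ and every $n\in N$, the point $\bm{u}_n$ indeed satisfies $\sum_j w_{n,j}|u_{n,j}|\leq\rho$, i.e.\ $\bm{u}_n\in B_{\ell_1}[\bm{w}_n,\rho]$.

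The main obstacle is justifying the support-coordinate limit $w_{n,j}|u_{n,j}|\to|x_{*,j}|/(|x_{*,j}|+\epsilon')$: this is where the precise form of the reweighting rule matters, and one must verify that the clipping into $[\check\epsilon,\hat\epsilon]$ is eventually inactive for $j\in J_*$ (which follows from the strict positivity of $|x_{*,j}|$) and that off-support, even if the weights saturate at $\hat\epsilon$, one still has $w_{n,j}|u_{n,j}|\to 0$. Once these two simple coordinate-wise facts are settled, the remainder of the argument—continuity, summation over the $L$ coordinates, and passage from a limit inequality to eventual containment—is routine.
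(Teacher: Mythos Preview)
Your proposal is correct and follows essentially the same route as the paper: both arguments exploit the reweighting rule $w_{n,j}\approx 1/(|u_{n,j}|+\check\epsilon)$ to compute $\lim_{n\to\infty}\sum_j w_{n,j}|u_{n,j}|=\sum_{j\in\supp(\bm{x}_*)}|x_{*,j}|/(|x_{*,j}|+\check\epsilon)<\norm{\bm{x}_*}_{\ell_0}$, and then use the strict gap to obtain a cofinite $N$. The only minor difference is that the paper simply writes $w_{n,i}|u_{n,i}|=|u_{n,i}|/(|u_{n,i}|+\check\epsilon)$ ``by definition'' and passes the (continuous) limit directly, without separately discussing the clipping into $[\check\epsilon,\hat\epsilon]$ that you flag as a potential obstacle; your added care there is harmless but not needed in the paper's setting.
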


\begin{proof}
By definition, $\sum_{i=1}^L w_{n,i} |u_{n,i}| = \sum_{i=1}^L
\frac{|u_{n,i}|}{|u_{n,i}| + \check{\epsilon}}$, $\forall
n\in\Natural$. Since $\lim_{n\rightarrow\infty}\bm{u}_n =
\bm{x}_*$, 
\begin{align*}
\limsup_{n\rightarrow\infty} \sum_{i=1}^L w_{n,i} |u_{n,i}| & = 
\limsup_{n\rightarrow\infty} \sum_{i=1}^L \frac{|u_{n,i}|}{|u_{n,i}|
  + \check{\epsilon}} = \lim_{n\rightarrow\infty} \sum_{i=1}^L
\frac{|u_{n,i}|}{|u_{n,i}| + \check{\epsilon}} \\
& = \sum_{i\in\supp(\bm{x}_*)} \frac{|x_{*,i}|}{|x_{*,i}| + \check{\epsilon}} +
\sum_{i\notin\supp(\bm{x}_*)} \frac{|x_{*,i}|}{|x_{*,i}| + \check{\epsilon}}
< \sum_{i\in\supp(\bm{x}_*)} \frac{|x_{*,i}|}{|x_{*,i}|} =
\norm{\bm{x}_*}_{\ell_0}.
\end{align*}
The previous strict inequality and the definition of $\limsup$ suggest that
there exists an $N\in\Natural_{\infty}$ such that
$\forall n\in N$ we have $\sum_{i=1}^L w_{n,i} |u_{n,i}| \leq
\norm{\bm{x}_*}_{\ell_0}$. In other words, we obtain that $\forall
n\in N$, $\forall \rho\geq \norm{\bm{x}_*}_{\ell_0}$, $\bm{u}_n \in
B_{\ell_1}[\bm{w}_n, \rho]$. This establishes
Lemma~\ref{lem:l1.ball.radius}.
\end{proof}

In other words, Lemma~\ref{lem:l1.ball.radius} suggests that in order
to have the sequence $(\bm{u}_n)_{n\in\Natural}$ converge to
$\bm{x}_*$, a necessary condition is to set the radius $\rho$, in the
weighted balls $(B_{\ell_1}[\bm{w}_n, \rho])_{n\in\Natural}$, to a
value that over-estimates $\norm{\bm{x}_*}_{\ell_0}$. This strategy
will be followed in the subsequent numerical examples.

\subsection{Numerical examples}\label{sec:simulations}

In this section, the performance of the proposed algorithm is
evaluated for both time-invariant and time-varying systems. To save
space, only a couple of scenarios are considered. For extensive
experiments on the behavior of similar in spirit algorithms, the
interested reader is referred to \cite{kst@tsp11, kopsinis.dsp11}.

The proposed methodology is compared to a couple of recent
time-adaptive methods \cite{ChenHero09, angelosante.occd} which belong
to the same algorithmic family; the cost function to be minimized is
the sum of a quadratic loss, accounting for the regression model,
together with an $\ell_1$-norm regularization term, in order to infuse
sparsity into the design. The method RZ-LMS \cite{ChenHero09} is built
upon the classical Least Mean Squares (LMS) algorithm, and employs
re-weighting for the regularization term. Its computational complexity
scales linearly with respect to the system unknowns, i.e., it is of
order $\mathcal{O}(L)$. Re-weighting of the $\ell_1$-norm is also
utilized in OCCD-TNWL \cite{angelosante.occd}, where the quadratic
regression term follows the strategy in the celebrated Recursive Least
Squares (RLS) method, scoring an overall computational complexity of
order $\mathcal{O}(4L^2)$.

Moreover, we mobilized batch methods for solving the classical LASSO
\cite{Tibshirani.Lasso, BergFriedlander:2008, spgl1.2007}, as well as
its re-weighted variant \cite{Zou.lasso}. In other words, for every
batch method, each point in the respective curves is the outcome of a
sub-process which takes into account all the available data available
till the current time instant. It is clear that such an operation is
infeasible in real-time implementations. Nevertheless, these
performances will serve as benchmarks for the $\ell_1$-norm
regularized least squares solvers.

\begin{figure}
\includegraphics[scale=0.4]{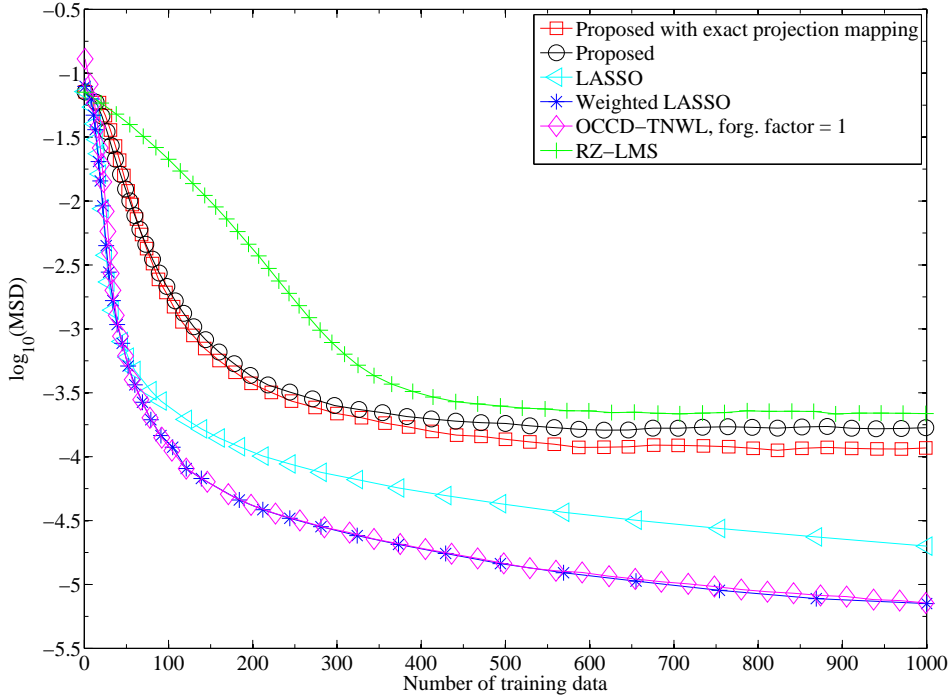}
\caption{Time-invariant sparse system $\bm{x}_*\in\Real^{100}$, with
  $\norm{\bm{x}_*}_{\ell_0}=5$. Here, the Mean Square Deviation (MSD)
  is defined as the following function on the number of the training
  data; $\text{MSD}(n) := \frac{1}{R} \sum_{r=1}^R \norm{\bm{x}_* -
    \bm{u}_n^{(r)}}^2$, $\forall n\in\Natural$, where $R$ is the total
  number of independent runs of the
  experiment. Here, $R:=300$.}\label{fig:time.invariant}
\end{figure}

Fig.~\ref{fig:time.invariant} refers to the case of a time-invariant
system $\bm{x}_*$, whose length is $L=100$ and only a number of $5$
coefficients, placed in arbitrary positions, are nonzero, i.e.,
$\norm{\bm{x}_*}_{\ell_0}=5$. The values of the nonzero coefficients
were drawn from a Gaussian distribution of zero mean and variance
equal to one. The input signal $(a_n)_{n\in\Integer}$ is defined as a
discrete-time Gaussian process of zero mean and variance equal to
$1$. The vectors $(\bm{a}_n)_{n\in\Natural}$, in
\eqref{linear.regression.model}, are formed as follows: $\forall
n\in\Natural$, $\bm{a}_n := [a_n, a_{n-1}, \ldots, a_{n-L+1}]^t$. The
noise process $(\zeta_n)_{n\in\Natural}$ is Gaussian with zero mean
and variance equal to $\sigma_n^2 := 0.1$.

In Fig.~\ref{fig:time.invariant}, the tag ``Proposed'' refers to
Algorithm~\ref{ex:subgrad.Thetan}. The curve ``Proposed with exact
projection mapping'' refers to Algorithm~\ref{ex:subgrad.Thetan}, but with
$P_{B_{\ell_1}[\bm{w}_n, \rho]}$ in the place of
$T_{\Phi_n}^{(\nu_n)}$, $\forall n\in\Natural$. This realization was
introduced in \cite{Slav.Kops.Theo.ICASSP10, kst@tsp11}. For both
``Proposed'' and ``Proposed with exact projection mapping'', $q$ was
set equal to $25$, $\omega_i^{(n)} := 1/\#\mathcal{I}_n$, $\forall
n\in\Natural$, in the cases where $\mathcal{I}_n\neq \emptyset$, $\rho:=6$,
$\check{\epsilon}:=0.005$, and $\xi := 2\sigma_n$.

All of the parameters for the methods ``LASSO''
\cite{Tibshirani.Lasso, BergFriedlander:2008, spgl1.2007}, ``Weighted
LASSO'' \cite{Zou.lasso}, ``OCCD-TNWL'' \cite{angelosante.occd}, and
``RZ-LMS'' \cite{ChenHero09} were tuned for producing the best
respective performance for the current setting. More specifically, the
forgetting factor for ``OCCD-TNWL'' \cite{angelosante.occd}, which is an
inherent parameter in any RLS-like scheme, was set equal to
$1$. Moreover, ``RZ-LMS'' \cite{ChenHero09} was tuned in such a
way for producing the lowest error floor for the iteration
$\#450$. Although different parameters for the ``RZ-LMS'' could result
into faster convergence speed, this could only be obtained at the
expense of higher error floors.

Fig.~\ref{fig:time.invariant} demonstrates that ``Proposed'' and
``Proposed with exact projection mapping'' lead to similar
performances. However, due to the mobilization of
$T_{\Phi_n}^{(\nu_n)}$ in ``Proposed'', the computational complexity
drops to $\mathcal{O}(qL)$, as opposed to $\mathcal{O}(qL + L\log L)$
in ``Proposed with exact projection mapping'', with $\mathcal{O}(L\log
L)$ accounting for sorting operations which are necessary for the
computation of the exact $P_{B_{\ell_1}[\bm{w}_n, \rho]}$. 

\begin{figure}
\includegraphics[scale=0.4]{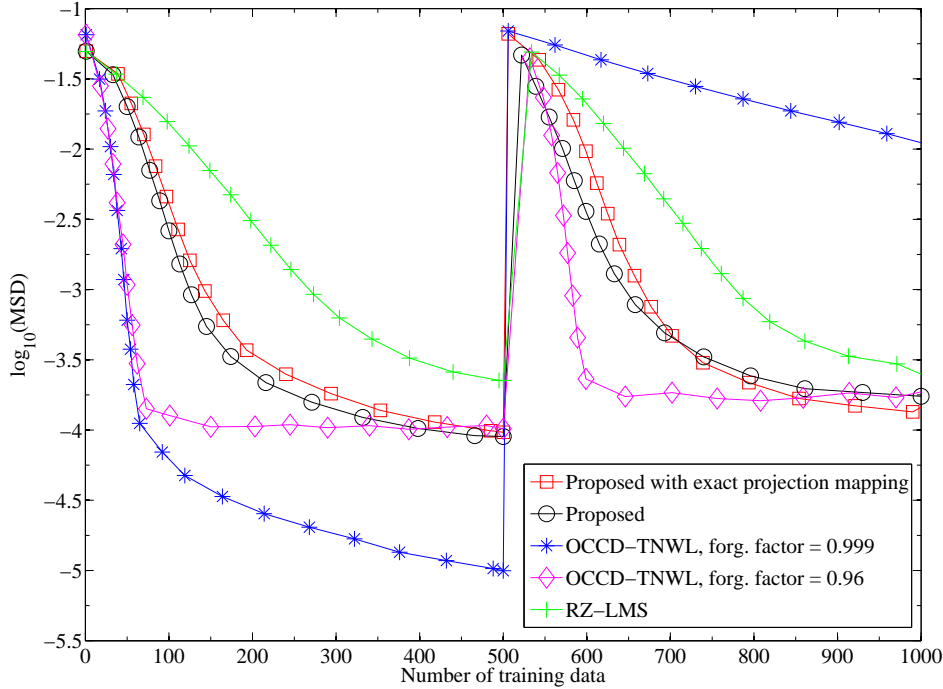}
\caption{Tracking performance for a time-varying sparse system
  $\bm{x}_*\in\Real^{100}$. The system $\bm{x}_*$ changes suddenly at
  the $\# 501$ time instant. Here, as in
  Fig.~\ref{fig:time.invariant}, $\text{MSD}(n) := \frac{1}{R}
  \sum_{r=1}^R \norm{\bm{x}_* - \bm{u}_n^{(r)}}^2$, $\forall
  n\in\Natural$, where $R$ is the total number of independent runs of
  the experiment. Similarly to Fig.~\ref{fig:time.invariant}, $R:=
  300$.}\label{fig:time.varying}
\end{figure}

Fig.~\ref{fig:time.varying} refers to the case of a time-varying
system. Both the number of nonzero elements of $\bm{x}_*$ and the
values of the system's coefficients are allowed to undergo sudden
changes. This is a typical scenario used in adaptive filtering in
order to study the tracking performance of an algorithm in
practice. The system used in the experiments is of dimension
$100$. The system change is realized as follows: For the first $500$
time instances, the first $5$ coefficients are set equal to $1$. Then,
at time instance $501$ the $\#2$ and $\#4$ coefficients are set equal
to zero, and all the odd coefficients from $\#7$ to $\#15$ are set
equal to $1$. Note that the sparsity level changes at time instance
$501$, and it becomes $8$ instead of $5$. The results are shown in
Fig.~\ref{fig:time.varying} with the noise variance being set equal to
$\sigma^2_n:= 0.1$.

Notice also here the similarity in the performance of ``Proposed'' and
``Proposed with exact projection mapping''. Moreover, the ``RZ-LMS''
shows better tracking ability than ``OCCD-TNWL'', with the forgetting factor
set equal to $0.999$. In order to raise the tracking ability of the
``OCCD-TNWL'', the method should be able to easily ``forget'' the remote
past and concentrate on recent variations of the system. This is
achieved by reducing the forgetting factor at the expense of an
increased error floor. We chose the value of $0.96$ for the forgetting
factor of the ``OCCD-TNWL'' in order to achieve similar error floor to the
``Proposed'' method, for both the employed sparse systems.

\section{Acknowledgments}

The authors would like to express their gratitude to the anonymous
reviewers, to Prof.~Sergios Theodoridis, Univ.~of Athens, Greece,
for the intriguing discussions and continuous support, to Dr.~Yannis
Kopsinis, for supplying us with lots of information on sparse
system/signal recovery and with the Matlab code for the experiments, and to
Assoc.~Prof.~Masahiro Yukawa, Niigata Univ., Japan, for his comments
which helped us to improve the original manuscript.


\end{document}